\newcommand{\Om}{\Omega}
\newcommand{\Ga}{\Gamma}
\renewcommand{\vec}[1]{\mathbf{#1}}
\newtheorem{theorem}{Theorem}[section]
\newtheorem{lemma}[theorem]{Lemma}
\newtheorem{proposition}[theorem]{Proposition}
\newtheorem{definition}[theorem]{Definition}
\newtheorem{erg}[theorem]{Result}
\newtheorem{beo}[theorem]{Remark}
\DeclareMathOperator*{\essinf}{ess\,inf}
\DeclareMathOperator*{\mass}{mass}
\journal{Journal of Differential Equations}
\begin{document}

\begin{frontmatter}



\title{Nontrivial Periodic Solutions of Marine Ecosystem Models of $N$-$DOP$ type}


\author[cau]
{C.~Roschat\corref{cor1}\fnref{dfg}}
\ead{cro@informatik.uni-kiel.de}
\author[cau]
{T.~Slawig}
\address[cau]
{Department of Computer Science, Kiel Marine Science - Centre for Interdisciplinary Marine Science, Christian-Albrechts-Universit\"at zu Kiel,
24098 Kiel, Germany
}
\cortext[cor1]{Corresponding author.
}
\fntext[dfg]{Supported by the DFG Cluster Future Ocean, Grant No. CP1338.}

\begin{abstract}
We investigate marine ecosystem models of $N$-$DOP$ type with regard to nontrivial periodic solutions. The elements of this important, widely-used model class typically consist of two coupled advection-diffusion-reaction equations. The corresponding reaction terms are divided into a linear part, describing the transformation of one model variable into the other, and a bounded nonlinear part. Additionally, the model equations conserve the mass contained in the system, i.e. the masses of both variables add up to a constant total mass. In particular, the trivial function is a periodic solution. 
In this paper, we prove that there is at least one periodic solution for every prescribed total mass. 
The proof makes use of the typical properties of $N$-$DOP$ type models by combining results from monotone operator theory and a fixed point argument. In the end, we apply the theorem to the $PO_4$-$DOP$ model, an $N$-$DOP$ type model which is well-known and often used.
\end{abstract}

\begin{keyword}
Marine ecosystem models \sep Periodic solutions \sep Advection-diffusion-reaction equations \sep Nonlinear coupling \sep Monotone operators

\MSC[2010] 35Q92 \sep 35B10 \sep 35D30\sep 35R20
%
%

\end{keyword}

\end{frontmatter}


\section{Introduction}
Marine ecosystems are described via mathematical models. The $N$-$DOP$ type models, consisting of two coupled advection-diffusion-reaction equations, form one of the most important model classes. Characteristically, these models reflect the transformation of one substance into the other, quantified by a transformation rate $\lambda$.
Further biogeochemical processes are represented by nonlinear, bounded reaction terms. As in most models, the ocean dynamics, specified by advection and diffusion, are supposed to be equal in both equations. This is a reasonable assumption because in applications, ocean dynamics are pre-computed in order to avoid simulating both ocean and biochemical models simultaneously. 
Another important feature of $N$-$DOP$ type models is the conservation of mass, i.e. the total mass does not change with respect to time.

%
$N$-$DOP$ type models are widely spread because they are relatively simple and thus well-suited for testing purposes (cf. \cite{pr13,kwo09, and95}). In \cite{kri10, kri12}, an assessment on the basis of real data indicates that models of $N$-$DOP$ type can often compete with more complicated ones. In \cite{pa05, pa06}, two slightly differing $N$-$DOP$ type models are extended by a third equation in order to reveal the interaction between iron concentration and oceanic processes. The numerous applications of $N$-$DOP$ type models show their importance and relevance. 

In most applications, the model equations are solved periodically because of the observational data the solutions are compared to. Data are usually averaged over several years in order to smooth out one-off effects. To verify and validate numerically obtained solutions, theoretical results about their existence are helpful and desired. 
However, in opposite to transient solutions of $N$-$DOP$ type model equations (cf. Roschat et al.~\cite{ro14}) periodic ones have not been the object of investigation so far. %

Periodic solvability of semilinear parabolic partial differential equations is a challenging task. The assumptions in most established existence theorems include coercivity and monotonicity or pseudo-monotonicity (cf. e.g. \cite{gaj67, shi97, sat02}). 
However, the mentioned standard approaches do not apply to model equations of $N$-$DOP$ type directly, particularly since the conservation of mass condition prevents the advection-diffusion-reaction operator from being coercive. Furthermore, the model equations, being designed as closed systems, have no sources or sinks (inhomogeneities) and thus are solved by the trivial function 0. This periodic solution represents an empty ecosystem without any reactions which is of no interest for neither marine biologists nor mathematicians. Thus, it is necessary to exclude the trivial solution.  

We overcame these obstacles by developing a new proof for periodic solvability, individually adapted to the setting of $N$-$DOP$ type models. The proof is based on a classical theorem about existence of periodic solutions, applied in different solution spaces, and the Schauder Fixed Point Theorem.

The paper is structured as follows: In the next section, we introduce both classical and weak formulations of the $N$-$DOP$ type model equations and formulate our main theorem about periodic solvability. The third section contains the proof of the main result preceded by some important preliminaries. In the last section, we exemplarily show that the main theorem holds for the $PO_4$-$DOP$ model, an $N$-$DOP$ type model of Parekh et al. \cite{pa05}.

\section{Problem formulation and main result}
\subsection{General assumptions}
Let 
$T>0$ and $\Omega\subset\mathbb{R}^3$ be an open, bounded set with a Lipschitz boundary\footnote{For a definition see e.g. \cite{tr02}, Section~2.2.} $\Gamma:=\partial\Omega$. $\eta(s)$ denotes the outward-pointing unit normal vector in $s\in\Ga$. 
Suppose that $\vec v\in L^\infty(0,T; H^1(\Om)^3)$ has the properties ${\rm div}(\vec v(t))=0$ in $L^2(\Om)$ and $\vec v(t)\cdot\eta=0$ in $L^2(\Ga)$, each for almost every $t\in [0,T]$. Let $\kappa\in L^\infty(0,T;L^{\infty}(\Om))$ with
$\kappa_{\min}:=\essinf_{(x,t)\in \Om\times(0,T)}\kappa(x,t)>0$. 
 
For each $j\in\{1,2\}$, the model's reaction terms 
\begin{displaymath}
d_j:L^2(0,T;L^2(\Om))^2\to L^2(0,T;L^2(\Om))\quad\text{and}\quad b_j: L^2(0,T;L^2(\Om))^2\to L^2(0,T;L^2(\Ga))
\end{displaymath}
are generated by the indexed families $(d_j(t))_t$ and $(b_j(t))_t$ of operators
\begin{displaymath}
d_j(t): L^2(\Om)^2\to L^2(\Om)\quad\text{and}\quad b_j(t):L^2(\Om)^2\to L^2(\Ga)
\end{displaymath}
via $d_j(y,x,t):=d_j(y)(x,t):=d_j(t)(y(t))(x)$ and $b_j(y,x,t):=b_j(y)(x,t):=b_j(t)(y(t))(x)$.

\subsection{The model equations}\label{sec:modelequations}
An ecosystem model of $N$-$DOP$ type has the form 
 \begin{align}
\label{eq:zustand}
\begin{array}{rcll}
\displaystyle \partial_ty_1+{\rm div}(\vec vy_1)-{\rm div}(\kappa\nabla  y_1)-\lambda y_2+d_1(y_1,y_2)&=&0\quad\text{in }\Om\times[0,T]
\\
\displaystyle \partial_ty_2+{\rm div}(\vec vy_2)-{\rm div}(\kappa\nabla  y_2)+\lambda y_2+d_2(y_1,y_2)&=&0\quad\text{in }\Om\times[0,T]\\
\displaystyle \nabla y_j\cdot(\kappa\eta)+ b_j(y_1,y_2)&=&0\quad\text{in }\Ga\times[0,T],
j=1,2.
\end{array}
\end{align}
As to the interpretation, the first equation determines the concentration of a nutrient $N$, the second describes dissolved organic phosphorus ($DOP$). The term $\lambda y_2$ models the amount of $y_2$ that is transformed (``remineralized'') into $y_1$.
 
Biological and numerical ecosystem models usually assume either homogeneous Neumann boundary conditions or none at all. However, the conservation of mass condition, formulated in Eq.~\eqref{eq:masseerhaltung} below, might require other, possibly nonlinear boundary conditions, indicated by $b_j$. The mass is formalized via the integral with respect to $\Om$.  
\begin{definition}\label{defi:mass}
Let $s>0$.
 The function 
 \begin{displaymath}
 \mass:L^1(\Om)^s\to \mathbb{R},\quad\mass(y):=\sum_{j=1}^s\int_{\Omega}y_jdx 
\end{displaymath}
relates any vector of functions on $\Om$ to its total mass in $\Om$.
\end{definition}

The existence result proved in this paper refers to weak solutions. A standard weak formulation for the boundary value problem \eqref{eq:zustand} is 
\begin{align*}\label{eq:schwach_speziell}
\int_0^T\{\langle y_1^{\prime}(t),w_1(t)\rangle_{H^1(\Om)^*} +B(y_1,w_1;t) +(-\lambda y_2(t)+d_1(y,.\,,t),w_1(t))_{L^2(\Om)}
 + (b_1(y,.\,,t),w_1(t))_{L^2(\Gamma)}\}dt&=0\\
 \int_0^T\{\langle y_2^{\prime}(t),w_2(t)\rangle_{H^1(\Om)^*} +B(y_2,w_2;t) +(\lambda y_2(t)+d_2(y,.\,,t),w_2(t))_{L^2(\Om)}
 + (b_2(y,.\,,t),w_2(t))_{L^2(\Gamma)}\}dt&=0
 \end{align*}
for all test functions $w_1,w_2\in L^2(0,T;H^1(\Om))$. The derivatives $y_j^{\prime}$ are assumed to be elements of $L^2(0,T;H^1(\Om)^*)$. The time-dependent bilinear form $B:H^1(\Om)\times H^1(\Om)\times[0,T]\to\mathbb{R}$ is defined by 
\begin{equation*}
 B(u,v;t):=\int_\Om(\kappa(t)\nabla u\cdot\nabla v)dx+  \int_\Om{\rm div}(\vec v(t) u)vdx \quad\text{for all } u,v\in H^1(\Om).
 \end{equation*}
$B$ is well-defined because of the first statement of Lemma~\ref{lem:operatorB} in the next section. 
In case $B$ is applied to time-dependent functions $\alpha,\beta\in L^2(0,T;H^1(\Om))$ we will write $B(\alpha,\beta;t)$ instead of $B(\alpha(t),\beta(t);t)$. 

As usual, we will interpret the weak formulation as operator equations. 
Abbreviating $X:=L^2(0,T;H^1(\Om))$ and $X^*=L^2(0,T;H^1(\Om)^*)$, we define the operators
\begin{align*}
 B:X
\to 
X^*, \,\, &\langle B(z),v\rangle_{X^*}:=\int_0^TB(z,v;t)dt,
\qquad \qquad\qquad
\pm\lambda Id: X\to X^*,\,\,
  \langle \pm\lambda z,v\rangle_{X^*}:=\int_0^T\int_\Om\pm\lambda z(t)v(t)dxdt
\\
& F_j:X^2
\to 
X^*, \,\,
  \langle F_j(y),v\rangle_{X^*}:=\int_0^T\{
  -(d_j(y,.\,,t),v(t))_{L^2(\Om)}
  -(b_j(y,.\,,t),v(t))_{L^2(\Ga)}\}dt,\,\, j\in\{1,2\},
 \end{align*} 
 for all $z,v\in X$, $y\in X^2$. 
A simple estimation shows that all of these operators are well-defined. Although slightly imprecise, we use the name $Id$ for the second operator 
because it is the standard embedding of $X$ into $X^*$.

Thus, solving $N$-$DOP$ type model equations actually means to find a solution $y=(y_1,y_2)$ of
\begin{align}\label{eq:problem}
 y_1^{\prime}+B(y_1)-\lambda y_2&=F_1(y)\nonumber\\
  y_2^{\prime}+B(y_2)+\lambda y_2&=F_2(y)\\
 y(0)&=y(T)\nonumber.
\end{align}
A suitable solution space for each component turns out to be $W(0,T;H^1(\Om)):=\{w\in X;\,w^{\prime}\in X^*\}$.
The periodicity condition is well-defined because of the embedding $W(0,T;H^1(\Om))\hookrightarrow C([0,T];L^2(\Om))$.
%
%
%
%
\subsection{Main result}
This section is dedicated to the main result of this paper. The proof will follow in the next section.
\begin{theorem}\label{sa:maintheorem}
Let $C,\lambda>0$.
Assume that the reaction terms $d$ and $b$ are continuous and that there are quadratically integrable functions $M_d\in L^2(0,T;L^2(\Om))$, $M_b\in L^2(0,T;L^2(\Ga))$ with
 \begin{equation}\label{eq:totalbound}
\max_j|d_j(y,x,t)|\leq M_d(x,t)\quad\text{ and }\quad\max_j|b_j(y,x,t)|\leq M_b(x,t)
\end{equation}
for almost all $t\in[0,T]$, $x\in \Om$ or $\Ga$, respectively, and all $y\in L^2(0,T;L^2(\Om))^2$. Further, suppose that the conservation of mass condition
\begin{equation}\label{eq:masseerhaltung}
\sum_{j=1}^2\left(\int_{\Omega}d_j(y,x,t)dx+\int_{\Ga}b_j(y,s,t)ds\right)=0
\end{equation}
holds. Hence, the problem \eqref{eq:problem} has a periodic solution $y\in W(0,T;H^1(\Om))^2$ with $\mass(y(t))=C$ for all $t\in[0,T]$. 
\end{theorem}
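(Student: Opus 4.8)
The plan is to recast \eqref{eq:problem} as a fixed-point problem and to split off the single non-coercive direction by a change of variables. Concretely, I would freeze the bounded reaction terms: for a trial function $y^0$ ranging over a suitable closed, bounded, convex set $K\subset L^2(0,T;L^2(\Om))^2$ of functions satisfying $\mass(y^0(t))=C$, the right-hand sides $F_1(y^0),F_2(y^0)\in X^*$ become fixed data and \eqref{eq:problem} turns into a \emph{linear} periodic problem for $y=(y_1,y_2)$. Solving it defines a map $\Phi\colon y^0\mapsto y$, and any fixed point of $\Phi$ is a periodic solution of \eqref{eq:problem} with prescribed mass $C$.

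The difficulty flagged in the introduction is that $B$ is coercive only modulo constants, since $B(u,u;t)=\int_\Om\kappa|\nabla u|^2\,dx$ (the advection part is skew-symmetric because ${\rm div}\,\vec v=0$ and $\vec v\cdot\eta=0$). I would remove this obstruction through the change of variables $u:=y_1+y_2$. Adding the two equations in \eqref{eq:problem} cancels the coupling $\pm\lambda y_2$ and yields the divergence-form equation $u'+B(u)=F_1(y^0)+F_2(y^0)$, while the second equation $y_2'+B(y_2)+\lambda y_2=F_2(y^0)$ keeps its zeroth-order term. The latter is coercive on all of $W(0,T;H^1(\Om))$ with constant $\min\{\kappa_{\min},\lambda\}$, so the classical periodic existence theorem produces a unique $y_2$. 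For the $u$-equation I would use a \emph{different solution space}: writing $u=C/|\Om|+\tilde u$ with $\tilde u$ of vanishing spatial mean, the constant is pinned by mass conservation (testing against $1$ and using \eqref{eq:masseerhaltung} gives $\frac{d}{dt}\int_\Om u\,dx=0$), while on the zero-mean subspace the Poincar\'e--Wirtinger inequality makes $B$ coercive; applying the classical theorem there yields $\tilde u$, and then $y_1:=u-y_2$. A short computation confirms that at a fixed point $(y_1,y_2)$ indeed solves \eqref{eq:problem}.

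To run the Schauder argument I would verify three points on $K$. First, self-mapping: since $|d_j|\le M_d$ and $|b_j|\le M_b$ with $M_d,M_b$ fixed, the data $F_j(y^0)$ are bounded in $X^*$ uniformly in $y^0$, so the coercivity estimates bound $y_2$ and $\tilde u$ in $W(0,T;H^1(\Om))$ independently of $y^0$; choosing the radius of $K$ accordingly gives $\Phi(K)\subset K$, and $\mass(y(t))=\int_\Om u\,dx=C$ holds by construction. Second, compactness: $\Phi$ maps into a bounded subset of $W(0,T;H^1(\Om))^2$, which embeds compactly into $L^2(0,T;L^2(\Om))^2$ by the Aubin--Lions lemma, so $\Phi(K)$ is relatively compact. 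Third, continuity of $\Phi$ in the $L^2(0,T;L^2(\Om))^2$-topology, which follows from the assumed continuity of the superposition operators $d,b$ together with the (bounded, hence continuous) linear solution operators of the two coercive problems.

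The main obstacle is precisely the non-coercivity of $B$, and the crux of the proof is the observation that the summed equation isolates this obstruction in a single constant mode that mass conservation fixes a priori, reducing everything else to coercive linear periodic problems. A secondary technical point is the continuity of $\Phi$: one must pass from $L^2$-convergence of $y^0$ through the nonlinear reaction terms and then through the linear solvers, which is exactly where the continuity hypothesis on $d,b$ and the uniform bounds $M_d,M_b$ are indispensable. Care is also needed to check that the constant background $C/|\Om|$, which enters only through $y_1=u-y_2$ inside the nonlinearities, is threaded correctly through the fixed-point iteration.
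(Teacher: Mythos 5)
Your overall strategy coincides with the paper's own proof: freeze the reaction terms, solve the $y_2$-equation via coercivity of $B+\lambda Id$, pass to the sum $u=y_1+y_2$, solve its equation on the zero-mean subspace $V=\{w\in H^1(\Om):\mass(w)=0\}$ where Poincar\'e makes $B$ coercive, and close with Schauder using the uniform bounds $M_d,M_b$, Aubin--Lions compactness, and continuity of the linear solution operators composed with $F_j$. However, there is a genuine gap at the central step. The classical periodic existence theorem (Theorem~\ref{sa:gajewski}), applied on the evolution triple $(V,H,V^*)$, produces $\tilde u\in W(0,T;V)$, so a priori $\tilde u'$ lies only in $L^2(0,T;V^*)$ and the equation $\tilde u'+B(\tilde u)=F_1(y^0)+F_2(y^0)$ holds only against test functions of zero spatial mean. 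Your parenthetical justification that ``testing against $1$ and using \eqref{eq:masseerhaltung} gives $\frac{d}{dt}\int_\Om u\,dx=0$'' is circular at this point: the pairing $\langle u'(t),1\rangle$ is not even defined, since $u'(t)=\tilde u'(t)$ is a functional on $V$ only and the constant $1$ does not belong to $V$. Consequently you cannot yet assert that $u=C/|\Om|+\tilde u$ solves the summed equation in $L^2(0,T;H^1(\Om)^*)$, nor that $y_1:=u-y_2$ lies in $W(0,T;H^1(\Om))$ and solves the first equation of \eqref{eq:problem} against \emph{all} test functions --- which is what the theorem claims.

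This is precisely the issue the paper spends most of its effort on (``we have to show $S\in W(0,T;H^1(\Om))$, i.e.\ a larger amount of test functions is allowed''). The paper's fix: take the transient solution $S_\tau\in W(0,T;H^1(\Om))$ with initial value $S(0)$, use the conservation-of-mass condition \eqref{eq:masseerhaltung} together with Lemma~\ref{lem:operatorB}(3) to show that $t\mapsto\mass(S_\tau(t))$ has weak derivative zero, so that $S_0:=S_\tau-|\Om|^{-1}\mass(S_\tau)$ belongs to $W(0,T;H^1(\Om))$ and solves the same equation, and then identify $S_0=S$ by a Gronwall argument, which transfers periodicity and membership in $W(0,T;H^1(\Om))$ to the periodic solution. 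Alternatively, your argument can be repaired directly: the natural extension $g:=F_1(y^0)+F_2(y^0)-B(\tilde u)\in L^2(0,T;H^1(\Om)^*)$ annihilates constants by Lemma~\ref{lem:operatorB}(3) and \eqref{eq:masseerhaltung}, and $\int_0^T\tilde u\,\varphi'\,dt$ has zero mean for every $\varphi\in C_0^\infty(0,T)$; since $H^1(\Om)=\mathbb{R}\oplus V$, the weak-derivative identity $\int_0^T\tilde u\,\varphi'\,dt=-\int_0^T g\,\varphi\,dt$, known in $V^*$, therefore holds in $H^1(\Om)^*$, giving $\tilde u'=g\in L^2(0,T;H^1(\Om)^*)$ and hence $\tilde u\in W(0,T;H^1(\Om))$ with the equation valid against all test functions. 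Either way, an explicit argument of this kind is indispensable; without it the construction of $y_1$ and the verification that the Schauder fixed point solves \eqref{eq:problem} do not go through.
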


\section{Proof}

\subsection{Preliminaries}

In the first part of the proof, we use results from monotone operator theory. Therefore, we recapitulate some relevant items. Proofs and further information can be found in Zeidler~
\cite[Chapter~23]{zei90} or Gajewski et al.~\cite{gaj67}.

An \emph{evolution triple} $(V,H,V^*)$ consists of a real and separable Hilbert space $H$ and a real, reflexive and separable Banach space $V$ that is continuously embedded and lies dense in $H$. By means of the theorem of Fr\'echet-Riesz, every element of $H$ can be identified with an element of $H^*$. Furthermore, $H^*$ is embedded in $V^*$ by restriction. Shortly, these relations are indicated by the notation $V\subset H \subset V^*$. 
For any evolution triple, the space 
\begin{displaymath}
W(0,T;V):=\{y\in L^2(0,T;V);\,y^{\prime}\in L^2(0,T;V^*)\}
\end{displaymath}
is continuously embedded in $C([0,T];H)$. Thus, an element of $W(0,T;V)$ can be evaluated in every $t\in[0,T]$. The following theorem collects some important facts about evolution triples.
\begin{theorem}\label{satz:hauptsatz} Let $y,v\in W(0,T;V)$. Then, the following properties are valid:
\begin{enumerate}
  \item The map $t\mapsto\|y(t)\|_{H}^2$ is differentiable almost everywhere with $\frac{d}{dt} \|y(t)\|_{H}^2=2\langle y^{\prime}(t),y(t)\rangle_{V^*}$.
  \item The formula of integration by parts  
  \begin{displaymath}
\int_0^T\langle y^{\prime}(t),v(t)\rangle_{V^*}dt+\int_0^T\langle v^{\prime}(t),y(t)\rangle_{V^*}dt=(y(T),v(T))_H-(y(0),v(0))_H
\end{displaymath}
holds.  In particular, this implies the ``fundamental theorem''
\begin{displaymath}
\int_0^T\langle y^{\prime}(t),y(t)\rangle_{V^*}dt=\frac{1}{2}(\|y(T)\|_{H}^2-\|y(0)\|_{H}^2).
\end{displaymath}
\end{enumerate}
\end{theorem}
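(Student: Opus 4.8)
The plan is the classical density argument: prove both formulas first for smooth functions, where they reduce to elementary calculus in $H$, and then extend to all of $W(0,T;V)$ by approximation. I would prove the integration-by-parts formula (part 2) first and deduce part 1 from it. Two structural facts of an evolution triple drive everything. First, the stated continuous embedding $W(0,T;V)\hookrightarrow C([0,T];H)$ makes the point evaluations $y\mapsto y(t)$ well-defined and bounded into $H$, so the boundary terms make sense and depend continuously on $y$. Second, the Riesz identification that defines $H\subset V^*$ gives $\langle h,w\rangle_{V^*}=(h,w)_H$ for every $h\in H$ and $w\in V$. I would then invoke (or establish by time-mollification) the density in $W(0,T;V)$, with respect to its graph norm, of the smooth subspace $\mathcal{D}:=\{t\mapsto\sum_{i=1}^{n}\varphi_i(t)v_i;\ n\in\N,\ \varphi_i\in C^1([0,T]),\ v_i\in V\}$.

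For $y,v\in\mathcal{D}$ the maps are continuously differentiable as $V$-valued, hence $H$-valued, functions, so $t\mapsto(y(t),v(t))_H$ is $C^1$ and the ordinary product rule in the Hilbert space $H$ applies:
\[
\frac{d}{dt}(y(t),v(t))_H=(y'(t),v(t))_H+(y(t),v'(t))_H=\langle y'(t),v(t)\rangle_{V^*}+\langle v'(t),y(t)\rangle_{V^*},
\]
where the last equality uses the Riesz identification above together with $y'(t),v'(t)\in V\subset H$. Integrating over $[0,T]$ yields part 2 for all $y,v\in\mathcal{D}$.

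To remove the smoothness I would choose sequences $y_n\to y$ and $v_n\to v$ in $W(0,T;V)$ with $y_n,v_n\in\mathcal{D}$ and pass to the limit term by term. Because $y_n'\to y'$ in $L^2(0,T;V^*)$ and $v_n\to v$ in $L^2(0,T;V)$, continuity of the duality pairing shows $\int_0^T\langle y_n',v_n\rangle_{V^*}\,dt\to\int_0^T\langle y',v\rangle_{V^*}\,dt$, and symmetrically for the second integral; the boundary terms converge because the embedding into $C([0,T];H)$ forces $y_n(0)\to y(0)$, $y_n(T)\to y(T)$ in $H$ (and likewise for $v_n$), so the $H$-inner products converge. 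This establishes part 2 in full, and the ``fundamental theorem'' is the special case $v=y$.

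Finally I would deduce part 1. The same argument applied on an arbitrary subinterval $[s,t]\subseteq[0,T]$ with $v=y$ gives $\|y(t)\|_H^2-\|y(s)\|_H^2=2\int_s^t\langle y'(\tau),y(\tau)\rangle_{V^*}\,d\tau$. Since $|\langle y'(\tau),y(\tau)\rangle_{V^*}|\le\|y'(\tau)\|_{V^*}\|y(\tau)\|_V$, Cauchy--Schwarz in time shows the integrand lies in $L^1(0,T)$, so $t\mapsto\|y(t)\|_H^2$ is absolutely continuous; by the Lebesgue differentiation theorem it is differentiable almost everywhere with derivative $2\langle y'(t),y(t)\rangle_{V^*}$, as claimed. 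The main obstacle I expect is the density step: verifying that $\mathcal{D}$ is dense in $W(0,T;V)$ in the graph norm requires a careful time-mollification argument that extends the function past the endpoints and controls $y_n\to y$ in $L^2(0,T;V)$ and $y_n'\to y'$ in $L^2(0,T;V^*)$ simultaneously, and it is there that reflexivity and separability of $V$ enter. Everything afterward is continuity bookkeeping.
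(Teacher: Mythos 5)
Your proposal is correct and coincides with the classical proof found in the references the paper cites for this theorem (Zeidler, Ch.~23, and Gajewski et al.) --- the paper itself offers no proof, only a recapitulation with citation --- namely: the product rule in $H$ for a dense class of smooth $V$-valued functions, limit passage using continuity of the duality pairing and of the embedding $W(0,T;V)\hookrightarrow C([0,T];H)$ for the boundary terms, and part~1 deduced from part~2 on subintervals via absolute continuity and Lebesgue differentiation. The only inessential slip is your closing remark that reflexivity and separability of $V$ enter in the density of $\mathcal{D}$: the mollification-after-extension argument works in any Banach space, and those hypotheses are needed elsewhere in the theory of evolution triples, not there.
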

We continue with some definitions. Given an evolution triple $(V,H,V^*)$ and $X:=L^2(0,T;V)$, the operator  $A:X\to X^*$ is called \emph{monotone} if 
$\langle Au-Av, u-v\rangle_{X^*}\geq0 \text{ for all } u,v\in X$ and \emph{strictly monotone} if $\langle Au-Av, u-v\rangle_{X^*}>0$ for $u\neq v$. $A$ is said to be \emph{coercive} if $\|u\|_X\to \infty$ implies ${\langle Au,u\rangle}_{X^*}/{\|u\|_X}\to\infty$ and \emph{hemicontinuous} if the map $t\mapsto \langle A(u+tv), w\rangle_{X^*}$ is continuous in $[0,1]$ for all $u,v,w\in X$.

The following theorem based on monotone operator theory is one of the major ingredients of the proof.

\begin{theorem}[Existence theorem of Gajewski et al.~\cite{gaj67}]\label{sa:gajewski}
If $A:X\to X^*$ is a continuous, monotone and coercive operator, the problem
\begin{displaymath}
u^{\prime}+Au=f,\quad u(0)=u(T),
\end{displaymath}
has a solution $u\in W(0,T;V)$ for every $f\in X^*$. If $A$ is strictly monotone, the solution is unique.
\end{theorem}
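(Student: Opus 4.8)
The plan is to prove the statement by the Galerkin method, combining the a~priori estimate furnished by coercivity with the monotonicity (Minty--Browder) argument, while treating the periodic boundary condition through a finite--dimensional fixed--point step. First I would fix a linearly independent, total system $(w_k)_{k\ge 1}$ in $V$, set $V_n:=\aufspann\{w_1,\dots,w_n\}$, and seek approximate solutions $u_n(t)=\sum_{k=1}^n c_{nk}(t)\,w_k\in V_n$ satisfying the projected equations $(u_n^{\prime}(t),w_j)_H+\langle Au_n(t),w_j\rangle_{V^*}=\langle f(t),w_j\rangle_{V^*}$ for $j=1,\dots,n$, together with $u_n(0)=u_n(T)$. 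On $V_n$ this is an ordinary differential system for the coefficients subject to a periodic boundary condition.

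The first genuine difficulty is to solve this finite--dimensional periodic problem. For $\xi\in V_n$ let $P\xi:=u_n(T)$ be the endpoint of the solution of the corresponding initial value problem, so that a periodic Galerkin solution is exactly a fixed point of the Poincar\'e map $P$. Testing the difference of two such solutions with itself and using monotonicity gives $\tfrac{1}{2}\frac{d}{dt}\|u_n-v_n\|_H^2=-\langle Au_n-Av_n,u_n-v_n\rangle_{V^*}\le 0$, so $P$ is nonexpansive in the $H$--norm; since nonexpansiveness alone need not produce a fixed point, I would first replace $A$ by the strictly monotone coercive perturbation $A+\varepsilon\,Id$, for which the same computation yields exponential decay of $\|u_n-v_n\|_H$ and hence a strict contraction, giving a unique regularised periodic solution by the Banach fixed--point theorem. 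The uniform a~priori bound below then permits sending $\varepsilon\to 0$ (alternatively, one may establish finite--dimensional solvability directly by a Brouwer degree argument).

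Next I would extract the a~priori estimate. Testing the Galerkin equation with $u_n$ and integrating over $[0,T]$, the derivative term equals $\tfrac{1}{2}(\|u_n(T)\|_H^2-\|u_n(0)\|_H^2)$ by the fundamental theorem in Theorem~\ref{satz:hauptsatz} and thus vanishes by periodicity, leaving $\langle Au_n,u_n\rangle_{X^*}=\langle f,u_n\rangle_{X^*}\le\|f\|_{X^*}\|u_n\|_X$; coercivity then forces $\|u_n\|_X$ to remain bounded uniformly in $n$. Since a monotone hemicontinuous operator defined on all of $X$ maps bounded sets to bounded sets, $(Au_n)$ is bounded in $X^*$, whence so is $(u_n^{\prime})$ by the equation. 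Passing to subsequences I obtain $u_n\rightharpoonup u$ in $X$, $u_n^{\prime}\rightharpoonup u^{\prime}$ and $Au_n\rightharpoonup\chi$ in $X^*$, with $u\in W(0,T;V)$ and $u^{\prime}+\chi=f$. Periodicity of the limit is inherited because endpoint evaluation $W(0,T;V)\to H$ is continuous, hence weakly continuous, so $u_n(0)=u_n(T)$ passes to $u(0)=u(T)$.

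The main obstacle, and the heart of the argument, is the identification $\chi=Au$ by Minty's trick. Here the crucial point is to control $\limsup_n\langle Au_n,u_n\rangle_{X^*}$: using the equation and the periodicity--induced cancellation of the derivative term once more gives $\langle Au_n,u_n\rangle_{X^*}=\langle f,u_n\rangle_{X^*}\to\langle f,u\rangle_{X^*}=\langle\chi,u\rangle_{X^*}$, where the last equality again uses that the derivative term vanishes for the periodic limit $u$. Feeding this into the monotonicity inequality $\langle Au_n-Av,u_n-v\rangle_{X^*}\ge 0$ and letting $n\to\infty$ yields $\langle\chi-Av,u-v\rangle_{X^*}\ge 0$ for every $v\in X$; choosing $v=u-t\,w$ with $t>0$, dividing by $t$, and letting $t\downarrow 0$ with hemicontinuity gives $\langle\chi-Au,w\rangle_{X^*}\ge 0$ for all $w\in X$, hence $\chi=Au$. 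Thus $u$ solves $u^{\prime}+Au=f$ with $u(0)=u(T)$. Finally, uniqueness under strict monotonicity follows at once: for two periodic solutions the difference tested with itself has vanishing derivative term by periodicity, so $\langle Au-Av,u-v\rangle_{X^*}\le 0$, which strict monotonicity permits only when $u=v$.
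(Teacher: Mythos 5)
The paper does not prove this theorem at all: it is imported from Gajewski et al.\ (with Zeidler cited as an alternative source), so there is no internal proof to compare yours against. Your Galerkin--Minty strategy is in the spirit of the classical proofs, but it treats periodicity differently: the standard arguments in the cited literature realize the time derivative under periodic boundary conditions as a maximal monotone linear operator $L$ with domain $D(L)=\{u\in W(0,T;V):u(0)=u(T)\}$, note that $\langle Lu,u\rangle_{X^*}=0$ there, and invoke a surjectivity theorem for the sum $L+A$, whereas you enforce periodicity at the finite-dimensional level through a Poincar\'e map. That idea is legitimate, and your energy cancellation, weak compactness, weakly continuous endpoint evaluation, Minty trick and uniqueness steps are all correct. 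Two points, however, are genuine gaps.

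First, the statement concerns an abstract operator $A:X\to X^*$ with $X=L^2(0,T;V)$; nothing in the hypotheses allows $A$ to be evaluated pointwise in time. Your projected system, and with it the Poincar\'e map $P\xi:=u_n(T)$, only makes sense if $(Au)(t)$ depends on $u(t)$ alone, i.e.\ if $A$ is induced by a Carath\'eodory family $A(t):V\to V^*$ (as the operators $B+\lambda Id$ and the restricted $B$ in this paper indeed are). As written, your argument therefore proves the theorem only for induced operators, not the abstract statement; in the general case the ODE step has to be replaced, e.g.\ by the maximal monotone realization of $d/dt$ mentioned above. Second, your claim that a monotone hemicontinuous operator defined on all of $X$ maps bounded sets to bounded sets is false in infinite dimensions: monotonicity plus full domain yields only \emph{local} boundedness (Rockafellar), and gradients of continuous convex functions that are unbounded on the unit ball of a separable Hilbert space give counterexamples. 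The conclusion you need, boundedness of $(Au_n)$ in $X^*$, is still true, but for a different reason: monotonicity gives $\langle Au_n,v\rangle_{X^*}\le\langle Au_n,u_n\rangle_{X^*}-\langle Av,u_n-v\rangle_{X^*}$ for each fixed $v\in X$, the right-hand side is bounded in $n$ because $\langle Au_n,u_n\rangle_{X^*}=\langle f,u_n\rangle_{X^*}$ (the energy identity with the periodic cancellation) and $\|u_n\|_X\le C$ by coercivity, and the uniform boundedness principle then bounds $\|Au_n\|_{X^*}$. With these two repairs --- and a brief justification that the regularized finite-dimensional flows exist globally on $[0,T]$ (Peano plus a Gronwall bound, with uniqueness and nonexpansiveness coming from monotonicity) --- your proof is complete in the setting in which the paper actually applies the theorem.
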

%
%
Next, we gather some results about the advection-diffusion operator $B$.
\begin{lemma}\label{lem:operatorB}
  The operator $B:L^2(0,T;H^1(\Om))\to L^2(0,T;H^1(\Om)^*)$ is linear and monotone. Furthermore,
   \begin{enumerate}
\item $|B(y,w;t)|\leq C_B\|y(t)\|_{H^1(\Om)}\|w(t)\|_{H^1(\Om)}$
  \item$\kappa_{\min}\|\nabla y(t)\|^2_{L^2(\Om)^3}\leq B(y,y;t)$
\item $B(y,1;t)=0$
\item $B(y+c,w;t)=B(y,w;t)$\,\, for every function $c:[0,T]\to\mathbb{R}$,
\end{enumerate}
each for all $y,w\in L^2(0,T;H^1(\Om))$ and almost every $t$.
\end{lemma}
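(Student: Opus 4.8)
The plan is to derive every assertion from the two structural hypotheses on the velocity field, namely ${\rm div}(\vec v(t))=0$ in $L^2(\Om)$ and $\vec v(t)\cdot\eta=0$ in $L^2(\Ga)$, together with the lower bound $\kappa\geq\kappa_{\min}>0$. Linearity of the map $y\mapsto B(y)$ is immediate, since for fixed $t$ the integrand of $B(u,v;t)$ is linear in $u$ and in $v$ separately. The remaining four items split into one quantitative estimate (item~1), which rests on Sobolev embeddings, and three algebraic identities (items~2--4), which all follow from one integration-by-parts computation for the advective part. Monotonicity is then a consequence of item~2 and linearity.

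For item~1 I would estimate the two summands of $B(u,v;t)=\int_\Om\kappa(t)\nabla u\cdot\nabla v\,dx+\int_\Om{\rm div}(\vec v(t)u)v\,dx$ separately. The diffusive term is bounded by $\|\kappa\|_{L^\infty(0,T;L^\infty(\Om))}\|\nabla u(t)\|_{L^2}\|\nabla v(t)\|_{L^2}$ via Cauchy--Schwarz. For the advective term I would first use ${\rm div}(\vec v u)=\vec v\cdot\nabla u$ (valid because ${\rm div}\,\vec v=0$), and then, since in dimension three $\vec v\in H^1(\Om)^3$ need not lie in $L^\infty$, apply H\"older with exponents $6,2,3$ together with the Sobolev embedding $H^1(\Om)\hookrightarrow L^6(\Om)\hookrightarrow L^3(\Om)$ to obtain $|\int_\Om(\vec v\cdot\nabla u)v\,dx|\le C\|\vec v(t)\|_{H^1}\|\nabla u(t)\|_{L^2}\|v(t)\|_{H^1}$. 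Absorbing the essential bound on $\|\vec v(t)\|_{H^1}$ and the two embedding constants into a single $C_B$ yields the claim. The appearance of the Sobolev embedding here, forced by the mere $H^1$-regularity of $\vec v$, is the only genuinely analytic point of the lemma.

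For items~2--4 the engine is the Gauss--Green formula on the Lipschitz domain $\Om$. For item~4, if $c=c(t)$ is constant in space then $\nabla(y+c)=\nabla y$ and ${\rm div}(\vec v(y+c))={\rm div}(\vec v y)+c\,{\rm div}\,\vec v={\rm div}(\vec v y)$, so both summands of $B$ are unchanged. For item~3, the constant test function $v\equiv1$ annihilates the diffusive term, while $\int_\Om{\rm div}(\vec v y)\,dx=\int_\Ga y\,(\vec v\cdot\eta)\,ds=0$ by the divergence theorem and $\vec v\cdot\eta=0$. For item~2, the decisive observation is that the advective part vanishes on the diagonal: writing $\int_\Om(\vec v\cdot\nabla u)u\,dx=\int_\Om\vec v\cdot\nabla(u^2/2)\,dx=\int_\Ga(\vec v\cdot\eta)\tfrac{u^2}{2}\,ds-\int_\Om\tfrac{u^2}{2}\,{\rm div}\,\vec v\,dx=0$, so that $B(u,u;t)=\int_\Om\kappa(t)|\nabla u|^2\,dx\ge\kappa_{\min}\|\nabla u(t)\|_{L^2}^2$. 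Monotonicity then follows at once: by linearity, $\langle B(u)-B(v),u-v\rangle_{X^*}=\int_0^TB(u-v,u-v;t)\,dt\ge\kappa_{\min}\int_0^T\|\nabla(u-v)(t)\|_{L^2}^2\,dt\ge0$.

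The step demanding the most care is the rigorous justification of the integration-by-parts identities for merely Sobolev-regular arguments, since the Gauss--Green formula on a Lipschitz domain requires the field under the divergence to lie in $W^{1,1}(\Om)^3$ with an integrable trace. For the field $\vec v\,u^2/2$ appearing in item~2 this has to be verified through a H\"older estimate on $\nabla(\vec v u^2)=u^2\nabla\vec v+2u\,\vec v\,\nabla u$, whose two terms land in $L^{6/5}\subset L^1$ using $\vec v,u\in L^6$ and $\nabla\vec v,\nabla u\in L^2$. Alternatively, and perhaps more cleanly, I would verify all three identities first for smooth $u,v$ and extend them to $H^1(\Om)$ by density, the passage to the limit being legitimate precisely because of the continuity estimate established in item~1.
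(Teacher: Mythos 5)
Your proposal is correct, and where it overlaps with the paper's own argument it takes the same route: you obtain items 3 and 4 exactly as the paper does, from the Gauss divergence theorem together with the structural hypotheses ${\rm div}(\vec v(t))=0$ and $\vec v(t)\cdot\eta=0$ (the paper phrases item 4 via bilinearity and $B(c,w;t)=0$, you phrase it by noting both integrands are unchanged under adding $c(t)$ --- the same computation). The genuine difference is one of scope: the paper does not prove items 1, 2 or monotonicity at all, but delegates them to Roschat et al.~\cite{ro14}, whereas you supply the standard arguments in full --- Cauchy--Schwarz plus the H\"older/Sobolev estimate with exponents $(6,2,3)$ for item 1 (correctly identifying that this is forced by $\vec v(t)$ being merely $H^1$, not $L^\infty$), the vanishing of the advective term on the diagonal via $\int_\Om\vec v\cdot\nabla(u^2/2)\,dx=0$ for item 2, and monotonicity as an immediate corollary of item 2 and linearity. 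You also attend to a point the paper leaves implicit: justifying Gauss--Green for fields of only Sobolev regularity such as $\vec v\,u^2/2$, either by the $W^{1,1}$ bookkeeping (both terms of the gradient landing in $L^{6/5}$) or by proving the identities for smooth functions and extending by density using the item-1 bound. Both of your routes for that step are sound; the density argument is the cleaner one, and the net effect is a self-contained proof of the whole lemma where the paper proves only half of it and cites a reference for the rest.
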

\begin{proof}
The first two items and the monotonicity of $B$ are proved e.g. by Roschat et al.~\cite{ro14}. The third statement holds because of Gau\ss' divergence theorem and the assumption about the velocity vector $\vec v$:
 \begin{displaymath}
B(y,1;t)=\int_\Om(\kappa(t)\nabla y(t)\cdot\nabla1)dx+  \int_\Om{\rm div}(\vec v(t)y(t))1dx=0+ \int_\Ga(\vec v(t)y(t))\cdot\eta ds=\int_\Ga y(t)(\vec v(t)\cdot\eta) ds=0.
\end{displaymath}

Finally, $B$ is bilinear, $c(t)$ is constant with respect to $x$ and $\vec v(t)$ is divergence free. Thus, we obtain 
\begin{displaymath}
B(c,w;t)=c(t)\left(\int_\Om(\kappa(t)\nabla1\cdot\nabla w(t))dx+ \int_\Om{\rm div}(\vec v(t))w(t)dx\right)=0.
\end{displaymath}
This proves the last statement of the lemma.
\end{proof}

Another important argument in the upcoming proof is the fixed point theorem of Schauder (see e.g. \cite[Thm.~2.A]{zei85}):
\begin{theorem}[Schauder Fixed Point Theorem]
Let $M$ be a nonempty, closed, bounded and convex subset of a Banach space $X$. Suppose $A:M\to M$ is continuous and maps bounded sets to relatively compact sets (i.e. $A$ is a compact operator). Then $A$ has a fixed point.
\end{theorem}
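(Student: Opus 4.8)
The plan is to reduce the infinite-dimensional statement to the finite-dimensional Brouwer fixed point theorem by approximating the compact operator $A$ through maps with finite-dimensional range. The decisive structural fact I would exploit is that, since $A$ is compact and $M$ is bounded, the set $A(M)$ is relatively compact; this relative compactness is precisely what makes a finite-dimensional approximation possible to arbitrary accuracy, compensating for the compactness of closed bounded sets that is lost outside finite dimensions.

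First I would fix $n\in\mathbb{N}$ and use the total boundedness of $\overline{A(M)}$ to select finitely many points $y_1,\dots,y_{k_n}\in A(M)$ whose $1/n$-balls cover $A(M)$. On this cover I would build a Schauder projection $P_n$ by a partition-of-unity construction: setting $\mu_i(y):=\max\{0,\,1/n-\|y-y_i\|_X\}$, the map
\[
P_n(y):=\frac{\sum_{i=1}^{k_n}\mu_i(y)\,y_i}{\sum_{i=1}^{k_n}\mu_i(y)}
\]
is well-defined and continuous on $A(M)$ (the denominator never vanishes, since the balls cover $A(M)$), takes values in the finite-dimensional compact convex set $K_n:=\mathrm{conv}\{y_1,\dots,y_{k_n}\}$, and satisfies the key estimate $\|P_n(y)-y\|_X\le 1/n$ for all $y\in A(M)$, because $P_n(y)$ is a convex combination of the $y_i$, each lying within distance $1/n$ of $y$.

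Next I would apply Brouwer's theorem to the finite-dimensional map. Since every $y_i$ lies in $M$ and $M$ is convex, we have $K_n\subseteq M$, so the composition $A_n:=P_n\circ A$ maps the nonempty compact convex set $K_n$ continuously into itself. Brouwer's fixed point theorem then yields $x_n\in K_n$ with $P_n(A(x_n))=x_n$, and the approximation bound gives $\|x_n-A(x_n)\|_X=\|P_n(A(x_n))-A(x_n)\|_X\le 1/n$, so $x_n$ is an approximate fixed point. Finally I would pass to the limit: the sequence $\{A(x_n)\}_n$ lies in the compact set $\overline{A(M)}$, so a subsequence $A(x_{n_j})$ converges to some $z\in\overline{A(M)}\subseteq M$ (using that $M$ is closed); the estimate forces $x_{n_j}\to z$ as well, and continuity of $A$ yields $A(x_{n_j})\to A(z)$, whence $A(z)=z$.

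The main obstacle is the construction and verification of the finite-dimensional reduction: one must check that $P_n$ is genuinely continuous and has range contained in $K_n\subseteq M$ so that Brouwer is applicable, and that the uniform bound $\|P_n-\mathrm{id}\|\le 1/n$ on $A(M)$ is strong enough to survive the limiting argument. The theorem is ultimately a consequence of Brouwer's theorem, which I would take as given; the entire analytical content lies in trading the compactness unavailable in the infinite-dimensional setting for the total boundedness supplied by the hypothesis that $A$ is compact.
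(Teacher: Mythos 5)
Your proof is correct and complete: the Schauder-projection construction via the partition of unity $\mu_i$, the reduction to Brouwer's theorem on the nonempty compact convex set $K_n\subseteq M$, the approximate-fixed-point estimate $\|x_n-A(x_n)\|_X\le 1/n$, and the limiting argument using the relative compactness of $A(M)$ together with the closedness of $M$ are all sound. Note that the paper does not prove this theorem at all — it is quoted as a known result with a citation to Zeidler — and your argument is precisely the classical finite-dimensional-approximation proof found in that reference, so you have simply supplied the standard proof that the paper takes as given.
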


\subsection{Proof of Theorem~\ref{sa:maintheorem}}
The proof of the existence theorem \ref{sa:maintheorem} is divided into two steps. First, the equations are linearized and solved with the help of monotone operator theory. Afterwards, the Schauder Fixed Point Theorem is applied to obtain a solution of the nonlinear problem.

\paragraph{Periodic solution of a linearized problem}
Let $z\in L^2(0,T;L^2(\Om))^2$ be arbitrary. In this first step we show that
\begin{align}\label{eq:problem_lin}
 y_1^{\prime}+B(y_1)-\lambda y_2&=F_1(z)\nonumber\\
  y_2^{\prime}+B(y_2)+\lambda y_2&=F_2(z)\\
 y(0)&=y(T)\nonumber\\
 \mass(y(t))&=C\quad\text{ for all }t\in[0,T].\nonumber
\end{align}
has a unique solution $y=(y_1,y_2)$. To this end, we apply Theorem~\ref{sa:gajewski} twice to different evolution triples. It proves necessary to switch to a solution space in which  the operator $B$ is coercive. 

First, we remark that, by linearization, the two model equations of \eqref{eq:problem_lin} have become decoupled. In particular, it is possible to solve the second equation 
\begin{align*}
  y_2^{\prime}+B(y_2)+\lambda y_2&=F_2(z)\\
 y_2(0)&=y_2(T)\nonumber
\end{align*}
independently of the first. The operator $A:=B+\lambda Id:L^2(0,T;H^1(\Om))\to L^2(0,T;H^1(\Om)^*)$ is linear and therefore hemicontinuous. By means of Lemma~\ref{lem:operatorB}, we obtain the estimate 
\begin{displaymath}
\langle B(y_2)+\lambda y_2, y_2\rangle_{L^2(0,T;H^1(\Om)^*)}\geq\int_0^T\!\!\{\kappa_{\min}\|\nabla y_2(t)\|^2_{L^2(\Om)^3}+\lambda\|y_2(t)\|^2_{L^2(\Om)}\},
dt
\geq\min\{\kappa_{\min},\lambda\}\|y_2\|^2_{L^2(0,T;H^1(\Om))}
\end{displaymath}
which immediately proves that $A$ is coercive and strictly monotone. Hence, Theorem~\ref{sa:gajewski}, applied to the evolution triple $(H^1(\Om), L^2(\Om), H^1(\Om)^*)$,  yields a unique periodic solution $y_2:=y_2(z)\in W(0,T;H^1(\Om))$.

It remains to find a periodic solution $y_1$ of the first equation. The operator $B$ is not coercive in the space $L^2(0,T;H^1(\Om))$ because the lower bound in the second statement of Lemma~\ref{lem:operatorB} only contains the norm of the gradient. 
In the following, we define another solution space in which $B$ is a coercive operator. 

The new evolution triple will be given by $V:=\{y\in H^1(\Om): \mass(y)=0\}$ and $H:=\overline{V}^{L^2(\Om)}$, the closure of $V$ with respect to the $L^2(\Om)$-norm. $V$ is a sub-Hilbert space of $H^1(\Om)$ and therefore reflexive and separable. Furthermore, $\|\,.\,\|_{V}:V\to\mathbb{R};\,y\mapsto\|\nabla y\|_{L^2(\Omega)^3}$ is a norm on $V$, equivalent to the usual $H^1$-norm due to Poincar\'e's inequality (see e.g. Evans \cite[Thm.~5.8.1]{evans}
). By definition, $V$ lies dense in $H$, endowed with the $L^2$-norm, and the embedding is continuous. Therefore, $(V, H, V^*)$ is an evolution triple. In addition, we notice
\begin{beo}\label{beo:masse0inH}
 Let $y\in H$. Then $\mass(y)=0$.
\end{beo}
Indeed, for $y\in H$, there exists a sequence $(y_n)_n\subset V$ with $y_n\to y$ with respect to the $L^2(\Om)$-norm. Since $\mass(y_n)=0$ for all $n$ we conclude
\begin{displaymath}
\mass(y)=\int_{\Omega}ydx=\int_{\Omega}(y-y_n)dx\leq\sqrt{|\Om|}\|y-y_n\|_{L^2(\Om)}\to 0.
\end{displaymath}

In order to find the solution's first component $y_1$, a detour via the sum $S:=y_1+y_2$ becomes necessary. Having obtained $S$ in $W(0,T;H^1(\Om))$, $y_1$ can be defined by the difference of $S$ and $y_2$.
Adding up both model equations suggests that $S$ has to solve
\begin{align}\label{eq:summeS}
 S^{\prime}+B(S)&=F_1(z)+F_2(z)\nonumber\\
   S(0)&=S(T)\\
   \mass(S(t))&=C \text{ for all $t$.}\nonumber
\end{align}
As we will see later, this equation provides the advantage that every solution $S\in W(0,T;V)$ automatically belongs to $W(0,T;H^1(\Om))$ because of the conservation of mass condition \eqref{eq:masseerhaltung}.

In some ecosystem models, the reaction terms fulfill $\sum_{j=1}^2d_j(y,x,t)=0$ and $b_1=b_2=0$ instead of just \eqref{eq:masseerhaltung}. In this case, the right-hand side of the equation for $S$ is zero and, thus, the constant function $S\in W(0,T;H^1(\Om))$ with $S(x,t)=|\Om|^{-1} C$ solves problem \eqref{eq:summeS}. However, for $N$-$DOP$ type models, this is usually not the case.

In order to treat a nontrivial right-hand side, Eq.~\eqref{eq:summeS} is solved on the basis of the evolution triple $(V,H,V^*)$, assuming the homogeneous condition $\mass(S(t))=0$. To this end, we restrict the summands to $L^2(0,T;V)\subset L^2(0,T;H^1(\Om))$ and obtain $B:L^2(0,T;V)\to L^2(0,T;V^*)$ as well as $F_1(z)+F_2(z)\in L^2(0,T;V^*)$.
The restricted $B$ is still hemicontinuous in $L^2(0,T;V)$ and, in addition,
 strictly monotone since
\begin{displaymath}
\langle B(S),S\rangle_{L^2(0,T;V^*)}
\geq\kappa_{\min} \int_0^T\|\nabla S(t)\|_{L^2(\Om)^3}^2dt=\kappa_{\min} \int_0^T\|S(t)\|_V^2dt=\kappa_{\min} \|S\|_{L^2(0,T;V)}^2>0
\end{displaymath}
if $S\in L^2(0,T;V)\setminus\{0\}$. 
This estimate also proves the coercivity of the restricted $B$. Thm.~\ref{sa:gajewski}, applied to the evolution triple $(V,H,V^*)$, yields a unique periodic solution $S\in W(0,T;V)$. Because of Remark~\ref{beo:masse0inH}, $\mass(S(t))=0$ for all $t\in[0,T]$.

Two problems remain to be solved: First, we have to show $S\in W(0,T;H^1(\Om))$, i.e. a larger amount of test functions is allowed. Second, we need $\mass(S(t))=C$ for all $t\in[0,T]$. 

As to the first problem, we remark that the initial value $S(0)\in H$ actually is an element of $L^2(\Om)$. It is well known that there is a transient solution $S_\tau\in W(0,T;H^1(\Om))$ of
\begin{displaymath}
 S_\tau^{\prime}+B(S_\tau)=F_1(z)+F_2(z),\quad S_\tau(0)=S(0)
\end{displaymath}
(see e.g.~Roschat et al.~\cite{ro14}).
Define $S_0\in C([0,T];L^2(\Om))$ by $S_0(t):=S_\tau(t)-|\Omega|^{-1}\mass(S_\tau(t))$ for all $t\in[0,T]$. 
Hence, $S_0$ has the following properties: 

\begin{lemma} $S_0\in L^2(0,T;V)$, $S_0^{\prime}=S^{\prime}_\tau$ and $S_0^{\prime}\in L^2(0,T;H^1(\Om)^*)$.
\end{lemma}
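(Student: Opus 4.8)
The plan is to verify the three assertions in turn: the inclusion $S_0\in L^2(0,T;V)$ follows by direct construction, while the identity $S_0'=S_\tau'$ and the regularity $S_0'\in L^2(0,T;H^1(\Om)^*)$ both reduce to showing that the spatially constant correction term $c(t):=|\Om|^{-1}\mass(S_\tau(t))$ is in fact also constant in time, so that it contributes nothing to the weak time derivative. The structural hypotheses of the model enter precisely at that point.

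First I would settle $S_0\in L^2(0,T;V)$. Writing $S_0(t)=S_\tau(t)-c(t)$, I note that $S_\tau(t)\in H^1(\Om)$ for almost every $t$ and that the spatially constant function $c(t)$ also lies in $H^1(\Om)$ with $\nabla c(t)=0$, so $S_0(t)\in H^1(\Om)$. For the integrability in time, the Cauchy--Schwarz estimate $|\mass(S_\tau(t))|\le|\Om|^{1/2}\|S_\tau(t)\|_{L^2(\Om)}$ yields $\|c\|_{L^2(0,T;H^1(\Om))}=\|c\|_{L^2(0,T;L^2(\Om))}\le\|S_\tau\|_{L^2(0,T;L^2(\Om))}<\infty$, whence $S_0\in L^2(0,T;H^1(\Om))$. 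Finally $\mass(c(t))=|\Om|\cdot|\Om|^{-1}\mass(S_\tau(t))=\mass(S_\tau(t))$, so $\mass(S_0(t))=0$ and therefore $S_0(t)\in V$ for almost every $t$, i.e. $S_0\in L^2(0,T;V)$.

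Next I would prove $S_0'=S_\tau'$, which is the heart of the matter. Since $S_0-S_\tau=-c$ and $c$ is spatially constant, it suffices to show that $t\mapsto\mass(S_\tau(t))$ is constant, for then $c$ has vanishing weak time derivative and $S_0'=S_\tau'-c'=S_\tau'$. To this end I would test the transient equation $S_\tau'+B(S_\tau)=F_1(z)+F_2(z)$ against the constant function $1\in H^1(\Om)$. Pointwise in time this gives
\begin{displaymath}
\langle S_\tau'(t),1\rangle_{H^1(\Om)^*}=-B(S_\tau,1;t)+\langle (F_1(z)+F_2(z))(t),1\rangle_{H^1(\Om)^*}.
\end{displaymath}
By Lemma~\ref{lem:operatorB}, item~3, the term $B(S_\tau,1;t)$ vanishes, and evaluating the functional $F_1(z)+F_2(z)$ at $1$ reproduces exactly $-\sum_{j=1}^2\bigl(\int_\Om d_j(z,x,t)\,dx+\int_\Ga b_j(z,s,t)\,ds\bigr)$, which is zero by the conservation of mass condition \eqref{eq:masseerhaltung}. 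Hence $\langle S_\tau'(t),1\rangle=0$ for almost every $t$. Because the constant $1$ belongs to $W(0,T;H^1(\Om))$ with zero time derivative, the polarized form of Theorem~\ref{satz:hauptsatz}, item~1 (or equivalently item~2), gives $\frac{d}{dt}\mass(S_\tau(t))=\frac{d}{dt}(S_\tau(t),1)_{L^2(\Om)}=\langle S_\tau'(t),1\rangle=0$ almost everywhere, so $\mass(S_\tau(\cdot))$, and thus $c$, is constant and $c'=0$.

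The last assertion is then immediate: from $S_0'=S_\tau'$ and $S_\tau\in W(0,T;H^1(\Om))$ we get $S_0'=S_\tau'\in L^2(0,T;H^1(\Om)^*)$ by definition of that space. The step I expect to be the main obstacle is the constancy of the mass of $S_\tau$: one must legitimately pass from the $X^*$-level definition of $F_1(z)+F_2(z)$ to its pointwise-in-time pairing with $1$ (noting that this functional acts through the $L^2(\Om)$- and $L^2(\Ga)$-inner products, the latter via the trace, so that $1$ is an admissible test element), and one must justify differentiating $t\mapsto\mass(S_\tau(t))$ and identifying the derivative with $\langle S_\tau'(t),1\rangle$ through the evolution-triple calculus of Theorem~\ref{satz:hauptsatz}; everything else is routine. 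I would note in passing that the same computation even shows $\mass(S_\tau(t))\equiv\mass(S_\tau(0))=\mass(S(0))=0$ by Remark~\ref{beo:masse0inH}, so that $c\equiv0$ and in fact $S_0=S_\tau$, though only the vanishing of $c'$ is needed here.
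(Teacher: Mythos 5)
Your proof is correct and rests on exactly the same pillars as the paper's: everything reduces to showing that $t\mapsto\mass(S_\tau(t))$ has vanishing time derivative, which follows from $B(\,\cdot\,,1;t)=0$ (Lemma~\ref{lem:operatorB}(3)) together with the conservation-of-mass condition \eqref{eq:masseerhaltung} applied to $\langle F_1(z)+F_2(z),1\rangle_{H^1(\Om)^*}$. The only divergence is technical: the paper proves that $\mass(S_\tau)$ is \emph{weakly} differentiable with derivative $0$, by pairing with $\varphi'$ for $\varphi\in C_0^\infty(0,T)$ (viewed as a spatially constant element of $W(0,T;H^1(\Om))$) and using the integration-by-parts formula of Theorem~\ref{satz:hauptsatz}(2), where $\varphi(0)=\varphi(T)=0$ removes the boundary terms; you instead pair the equation with $1\in H^1(\Om)$ at almost every fixed time --- legitimate, since the equation holds in $H^1(\Om)^*$ for a.e.\ $t$ --- and then differentiate $t\mapsto(S_\tau(t),1)_{L^2(\Om)}$ via the polarized form of Theorem~\ref{satz:hauptsatz}(1) (equivalently, item~2 applied on subintervals), which is a valid consequence of the evolution-triple calculus even though the paper does not state it explicitly. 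Your route even yields slightly more than the lemma asserts: $\mass(S_\tau)$ is constant in the classical sense and, by Remark~\ref{beo:masse0inH}, equal to $\mass(S(0))=0$, so that $S_0=S_\tau$; the paper never proves this identity, instead identifying $S_0$ with the periodic solution $S$ later by a separate Gronwall argument, and, as you correctly note, only the vanishing of $c'$ is needed here.
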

\begin{proof}
The first property holds because, obviously, $S_0\in L^2(0,T;H^1(\Om))$ and 
\begin{displaymath}
\mass(S_0(t))=\mass(S_\tau(t))-\int_{\Om}|\Omega|^{-1}\mass(S_\tau(t))dx=\mass(S_\tau(t))-|\Omega||\Omega|^{-1}\mass(S_\tau(t))=0\quad\text{ for all $t$}.
\end{displaymath}
Since $S_\tau^{\prime}\in L^2(0,T;H^1(\Om)^*)$, the third property follows from the second. By definition of $S_0$, the latter is equivalent to the weak differentiability of the map $\mass(S_\tau):[0,T]\to \mathbb{R}$ 
 with derivative 0.

To show this, let $\varphi\in C^\infty_0(0,T)$. Since the support of $\varphi$ is compact in $(0,T)$, we have $\varphi(0)=\varphi(T)=0$. $\varphi$ can be interpreted as an element of $W(0,T;H^1(\Om))$, constant with respect to $x$. The interpretation of the function $\varphi^{\prime}$ as an element of $L^2(0,T;H^1(\Om)^*)$ especially yields
$\int_0^T\mass(\varphi^{\prime}(t)S_\tau(t))dt=\int_0^T\langle \varphi^{\prime}(t),S_\tau(t)\rangle_{H^1(\Om)^*}dt$. 

Applying integration by parts in $W(0,T;H^1(\Om))$ (cf. Thm.~\ref{satz:hauptsatz}(2)), we obtain
\begin{align*}
-\!\int_0^T\!\!\mass(S_\tau(t))\varphi^{\prime}(t)dt&=-\!\int_0^T\!\!\langle\varphi^{\prime}(t),S_\tau(t) \rangle_{H^1(\Om)^*}dt=\!\int_0^T\!\!\langle S^{\prime}_\tau(t), \varphi(t)\rangle_{H^1(\Om)^*}dt-(\varphi(T),S_\tau(T))_{L^2(\Om)}+(\varphi(0),S_\tau(0))_{L^2(\Om)}\\
&=\!\int_0^T\!\!\langle S^{\prime}_\tau(t), \varphi(t)\rangle_{H^1(\Om)^*}dt
=\!\int_0^T\!\!(B(S_\tau,1;t)+\langle F_1(z)+F_2(z),1\rangle_{H^1(\Om)^*})\varphi(t)dt=0.
\end{align*}
In the last line, we inserted the equation $S_\tau$ solves, applied to the test function $\varphi\in L^2(0,T;H^1(\Om))$, and used that $\varphi(t)$ is independent of $x$. Finally, we employed Lemma~\ref{lem:operatorB}(3) and the conservation of mass condition \eqref{eq:masseerhaltung} which implies
\begin{displaymath}
\langle F_1(z)+F_2(z),1\rangle_{H^1(\Om)^*}=
  \sum_{j=1}^2\left(
  \int_{\Om}{d}_j(z,.\,,t)dx
  +
  \int_\Ga b_j(z,.\,,t)ds\right)=0\quad\text{for almost every $t$.} 
\end{displaymath}
We obtain  
\begin{displaymath}
S_0^{\prime}:=(S_\tau-|\Omega|^{-1}\mass(S_\tau))^{\prime}=S_\tau^{\prime}\in L^2(0,T,H^1(\Om)^*)
\end{displaymath}
which proves the last two claims of the lemma.
\end{proof}

By means of the recent lemma, we can prove that $S_0\in W(0,T;H^1(\Om))$ fulfills the same weak formulation as $S_\tau\in W(0,T;H^1(\Om))$. Since $\mass(S_\tau(t))$ is independent of the spatial coordinate, Lemma~\ref{lem:operatorB}(4) yields indeed
\begin{displaymath}
S_0^{\prime}+B(S_0)=S_\tau^{\prime}+B(S_\tau-|\Om|^{-1}\mass(S_\tau))=S_\tau^{\prime}+B(S_\tau)=
F_1(z)+F_2(z).
\end{displaymath}

In order to verify that $S_0$ is periodic, we prove $S_0=S$. 
Belonging to $L^2(0,T;V)$, the difference $\delta:=S-S_0$ can be inserted in the weak formulations of both $S_0$ and $S$ as a test function. Since these only differ in the space they are formulated in, their difference turns out to be
$\langle \delta^{\prime}(t),\delta(t)\rangle_{H^1(\Om)^*}+B(\delta,\delta;t) = 0$ almost everywhere. 
The statements of Thm.~\ref{satz:hauptsatz}(1) and Lemma~\ref{lem:operatorB}(2) yield
\begin{equation*}
\frac{d}{dt}\|\delta(t)\|_{H}^2\leq- 2\kappa_{\min}\|\nabla\delta(t)\|^2_{L^2(\Omega)^3}\leq0 \quad\text{for almost all $t$.}
\end{equation*}
Consequentially, for every $t\in [0,T]$, Gronwall's lemma leads to
\begin{equation*}
\|\delta(t)\|_{H}^2\leq \exp(0)\|\delta(0)\|^2_{H}=\|S(0)-(S_\tau(0)-|\Om|^{-1}\mass(S_\tau(0)))\|_{H}^2=\|S(0)-S(0)+|\Omega|^{-1}\mass(S(0))\|_{H}^2=0.
\end{equation*}
We used that, by definition, 
 $S_\tau(0)=S(0)\in H$ and $\mass(S(0))=0$ by Remark~\ref{beo:masse0inH}. Therefore, $\delta(t)=0$, i.e. $S(t)=S_0(t)$, for all $t$. 

Thus, $S_0\in W(0,T;H^1(\Om))\cap L^2(0,T;V)$ solves problem \eqref{eq:summeS} except for the condition concerning the mass. In a final step, we add a constant in order to adjust the volume. Define
\begin{displaymath}
S_C:=S_0+|\Om|^{-1}C\in W(0,T;H^1(\Om)).
\end{displaymath}
Obviously, $\mass(S_C(t))=C$ for all $t$. Furthermore, since $|\Om|^{-1}C$ is constant with respect to space and time, $S_C$ is periodic and the equalities $S_C^{\prime}=S_0^{\prime}$ and $B(S_C)=B(S_0)$ hold. Thus, $S_C$ fulfills the same weak formulation as $S_0$. 

Now, we define 
\begin{displaymath}
y_1:=S_C-y_2\in W(0,T;H^1(\Om)).
\end{displaymath}
Then, $y_1$ is periodic and solves the first equation of problem \eqref{eq:problem_lin} because the equations solved by $S_C$ and $y_2$ yield 
\begin{displaymath}
y_1^{\prime}+B(y_1)-\lambda y_2=
S_C^{\prime}-y_2^{\prime}+B(S_C-y_2)-\lambda y_2=
S_C^{\prime}+B(S_C)-(y_2^{\prime}+B(y_2)+\lambda y_2)=F_1(z)+F_2(z)-F_2(z)=F_1(z).
\end{displaymath}
Furthermore, the condition $\mass(y_1(t),y_2(t))=\mass(S_C(t))=C$ holds for all $t\in[0,T]$.

The uniqueness of $(y_1,y_2)$ is an immediate conclusion from the results above. Given two solutions $(y_1,y_2),(\tilde{y}_1,\tilde{y}_2)$ of \eqref{eq:problem_lin}, it holds $y_2=\tilde{y}_2$ as shown above. The difference $\delta:=y_1-\tilde{y}_1$ is a periodic solution of the equation $\delta^{\prime}+B(\delta)=0$ and 
belongs to $L^2(0,T;V)$ because 
\begin{displaymath}
\mass(\delta(t))=\mass(y_1(t))-\mass(\tilde{y}_1(t))=C-\mass(y_2(t))-(C-\mass(\tilde{y}_2(t)))=0\quad\text{for all }t\in [0,T].
\end{displaymath}
Since we have shown above, that equations of this kind (with an arbitrary right-hand side) are uniquely solvable in $L^2(0,T;V)$ and the constant function 0 is a solution, we conclude $\delta=0$. Therefore, the solution of \eqref{eq:problem_lin} is unique. 
\begin{erg}\label{erg:persollin}
 Given a fixed $z\in L^2(0,T;L^2(\Om))^2$,
 the pair $y(z):=(y_1,y_2)\in W(0,T;H^1(\Om))^2$ defines a unique solution of the linearized problem \eqref{eq:problem_lin}. 
\end{erg}
%
%
\paragraph{Periodic solution of the non-linear problem}
In this second step of the proof, we define the map
\begin{displaymath}
A: L^2(0,T;L^2(\Om))^2\to L^2(0,T;L^2(\Om))^2,\quad z\mapsto y(z)
\end{displaymath}
where $y(z)=(y_1,y_2)$ is the unique solution of problem~\eqref{eq:problem_lin}. According to Result~\ref{erg:persollin}, $A$ is well-defined. Obviously, $y$ is a fixed point of $A$ if and only if it is a solution of the original problem \eqref{eq:problem} with $\mass(y(t))=C$ for all $t\in[0,T]$. 

In the following, we will apply the Schauder Fixed Point Theorem  to $A$. 
To start with, we prove a lemma about the estimation of periodic solutions.
%
%
\begin{lemma}\label{lem:abschaetzung}
 Let $W\in\{V, H^1(\Om)\}$, $R\in L^2(0,T;H^1(\Om)^*)
 $ and $\gamma\geq0$. Let $w\in W(0,T;W)$ be a periodic solution of 
$w^{\prime}+B(w)+\gamma w=R.$

If either $\gamma>0$ or $W=V$ there is a constant $K$, only depending on $\gamma,\kappa_{\min}$ and the Poincar\'e constant, such that 
\begin{displaymath}
\|w\|_{L^2(0,T;H^1(\Om))}\leq K\|R\|_{L^2(0,T;H^1(\Om)^*)}.
\end{displaymath}
\end{lemma}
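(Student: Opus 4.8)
The plan is to run the standard energy method: test the equation with the solution $w$ itself and exploit periodicity to annihilate the time-derivative contribution. Concretely, I would pair $w^{\prime}+B(w)+\gamma w=R$ with $w$ in the duality of $L^2(0,T;H^1(\Om))$ and $L^2(0,T;H^1(\Om)^*)$, obtaining
\[
\int_0^T\langle w^{\prime}(t),w(t)\rangle_{H^1(\Om)^*}\,dt + \int_0^T B(w,w;t)\,dt + \gamma\int_0^T\|w(t)\|_{L^2(\Om)}^2\,dt = \int_0^T\langle R(t),w(t)\rangle_{H^1(\Om)^*}\,dt.
\]
In the case $W=V$ the same identity holds with the $V,V^*$ duality, but since $V\subset H^1(\Om)$ continuously I would keep the $H^1$-norms on the right-hand side in order to produce exactly the stated bound.

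First I would dispose of the time-derivative term. By the ``fundamental theorem'' of Theorem~\ref{satz:hauptsatz}(1) together with the periodicity $w(0)=w(T)$,
\[
\int_0^T\langle w^{\prime}(t),w(t)\rangle_{H^1(\Om)^*}\,dt = \tfrac12\bigl(\|w(T)\|_{H}^2-\|w(0)\|_{H}^2\bigr)=0,
\]
so this term drops out entirely. Next, Lemma~\ref{lem:operatorB}(2) furnishes the lower bound $B(w,w;t)\ge\kappa_{\min}\|\nabla w(t)\|_{L^2(\Om)^3}^2$, while the term $\gamma\|w(t)\|_{L^2(\Om)}^2$ is nonnegative. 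For the right-hand side I would apply the duality estimate and then Cauchy--Schwarz in time, yielding $\int_0^T\langle R(t),w(t)\rangle_{H^1(\Om)^*}\,dt\le\|R\|_{L^2(0,T;H^1(\Om)^*)}\,\|w\|_{L^2(0,T;H^1(\Om))}$.

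The crux is to recover the \emph{full} $H^1$-norm of $w$ from the left-hand side, and this is precisely where the hypothesis ``$\gamma>0$ or $W=V$'' splits into two cases. If $\gamma>0$, combining the gradient term with $\gamma\|w\|_{L^2}^2$ gives a lower bound $\min\{\kappa_{\min},\gamma\}\,\|w\|_{L^2(0,T;H^1(\Om))}^2$, so the $H^1$-norm is controlled irrespective of the mass of $w$. If instead $W=V$, then $w(t)$ has vanishing mass for almost every $t$, so Poincar\'e's inequality makes $\|\nabla w(t)\|_{L^2(\Om)^3}$ equivalent to $\|w(t)\|_{H^1(\Om)}$; hence $\kappa_{\min}\int_0^T\|\nabla w(t)\|_{L^2(\Om)^3}^2\,dt\ge c\,\|w\|_{L^2(0,T;H^1(\Om))}^2$ with $c$ depending only on $\kappa_{\min}$ and the Poincar\'e constant. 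In either case I obtain a constant $c_0>0$, a function of $\gamma$, $\kappa_{\min}$ and the Poincar\'e constant alone, with $c_0\|w\|_{L^2(0,T;H^1(\Om))}^2\le\|R\|_{L^2(0,T;H^1(\Om)^*)}\,\|w\|_{L^2(0,T;H^1(\Om))}$. Dividing by $\|w\|_{L^2(0,T;H^1(\Om))}$ (the estimate being trivial when $w=0$) yields the claim with $K:=1/c_0$.

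I expect the only genuine obstacle to be the bookkeeping of the two cases so that the final constant $K$ depends solely on the quantities listed in the statement; the reason the lemma excludes the remaining configuration $\gamma=0$, $W=H^1(\Om)$ is exactly that then the constants lie in the kernel of the left-hand side and no such $H^1$-bound can hold. Everything else is a routine energy estimate, with periodicity entering at a single point, namely to kill the boundary term.
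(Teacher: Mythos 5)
Your proof is correct and follows essentially the same energy argument as the paper: test the equation with $w$ itself, use periodicity to annihilate the time-derivative term, invoke Lemma~\ref{lem:operatorB}(2) together with either $\gamma>0$ or the Poincar\'e equivalence on $V$ to recover the full $H^1$-norm, and absorb the right-hand side. The only cosmetic difference is that the paper absorbs $\langle R(t),w(t)\rangle_{H^1(\Om)^*}$ via the Cauchy inequality with $\varepsilon$ pointwise in time before integrating, whereas you apply Cauchy--Schwarz in time and then divide by $\|w\|_{L^2(0,T;H^1(\Om))}$; both routes give a constant depending only on $\gamma$, $\kappa_{\min}$ and the Poincar\'e constant.
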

%
%
\begin{proof}
Inserting the element $w\in W(0,T;W)$ itself as a test function, we obtain
\begin{displaymath}
\langle w^{\prime}(t),w(t)\rangle_{W^*}+B(w,w;t) + \gamma\|w(t)\|_{L^2(\Om)}^2 = \langle R(t),w(t)\rangle_{H^1(\Om)^*}\quad\text{for almost every $t$.}
\end{displaymath}
We treat the left-hand side with the same arguments as above and estimate the right by Cauchy-Schwarz and the Cauchy inequality with $\varepsilon$ (see e.g. Evans \cite[B.2]{evans}). Hence
\begin{eqnarray}\label{eq:bewlemmaw}
\frac{1}{2}\frac{d}{dt}\|w(t)\|_{L^2(\Om)}^2+\kappa_{\min}\|\nabla w(t)\|_{L^2(\Om)^3}^2+\gamma\|w(t)\|_{L^2(\Om)}^2
\leq \frac{1}{4\varepsilon}\|R(t)\|_{H^1(\Om)^*}^2+\varepsilon\|w(t)\|_{H^1(\Om)}^2
\end{eqnarray}
for every $\varepsilon>0$. In case $\gamma>0$, we estimate with $\varepsilon_1:=(1/2)\min\{\kappa_{\min}, \gamma\}>0$ 
\begin{eqnarray*} 
\frac{1}{2}\frac{d}{dt}\|w(t)\|_{L^2(\Om)}^2+\frac{1}{2}\min\{\kappa_{\min}, \gamma\}\|w(t)\|_{H^1(\Om)}^2
\leq \frac{1}{2\min\{\kappa_{\min}, \gamma\}}\|R(t)\|_{H^1(\Om)^*}^2.
\end{eqnarray*}
In case $\gamma=0$, we assume $W=V$. Since the norm of the gradient is equivalent to the usual $H^1$-norm on $V$, we have $k\|w(t)\|_{H^1(\Om)}\leq\|\nabla w(t)\|_{L^2(\Om)^3}$ with $k>0$ only depending on the Poincar\'e constant. With $\varepsilon_2:=(1/2)k^2\kappa_{\min}$ we conclude from \eqref{eq:bewlemmaw}: 
\begin{eqnarray*} 
\frac{1}{2}\frac{d}{dt}\|w(t)\|_{L^2(\Om)}^2+\frac{1}{2}k^2\kappa_{\min}\|w(t)\|_{H^1(\Om)}^2
\leq \frac{1}{2k^2\kappa_{\min}}\|R(t)\|_{H^1(\Om)^*}^2.
\end{eqnarray*}
Integrating these equations with respect to $t\in[0,T]$, the first summand vanishes because of the periodicity of $w$ and Thm.~\ref{satz:hauptsatz}(2). Thus, the desired estimate holds with the constant $K:=\max\{k^2\kappa_{\min}, 
\min\{\kappa_{\min}, \gamma\} \}.
$
\end{proof}

In the following, we verify that the operator $A$ fulfills the assumptions of the Schauder Fixed Point Theorem. In a first step, we define a proper domain of definition $M$ for $A$. To this end, we show that the range of $A$ is bounded with respect to the norms of both $L^2(0,T;L^2(\Om))^2$ and $W(0,T;H^1(\Om))^2$, i.e. that $A(z)$ is bounded independently of $z$ for every $z\in L^2(0,T;L^2(\Om))^2$.

As to the second component of $y:=A(z)$, Lemma~\ref{lem:abschaetzung}, applied to $w:=y_2$, $\gamma:=\lambda>0$, $R:=F_2(z)$, yields $\|y_2\|_{L^2(0,T;H^1(\Om))}\leq K_1\|F_2(z)\|_{L^2(0,T;H^1(\Om)^*)}$. The first component was defined by $y_1=S_0+|\Om|^{-1}C-y_2$. Thus, only the boundedness of $S_0$ remains to be shown. The lemma, now applied to $w:=S_0$, $\gamma:=0$, $W:=V$ and $R:=F_1(z)+F_2(z)$, yields $\|S_0\|_{L^2(0,T;H^1(\Om))}\leq K_2\|F_1(z)+F_2(z)\|_{L^2(0,T;H^1(\Om)^*)}$.

Due to the definition of $F_j$ and to the boundedness assumption \eqref{eq:totalbound} there exists a constant $c_1>0$ such that 
\begin{equation*}
\|F_j(z)\|_{L^2(0,T;H^1(\Om)^*)}\leq c_1(\|d_j(z,.\,,.)\|_{L^2(0,T;L^2(\Om))}+\|b_j(z,.\,,.)\|_{L^2(0,T;L^2(\Ga))})
\leq c_1(\|M_d\|_{L^2(0,T;L^2(\Om))}+\|M_b\|_{L^2(0,T;L^2(\Ga))})=:C_1
\end{equation*}
for all $z\in L^2(0,T;L^2(\Om))^2$, $j\in\{1,2\}$.
%
%
This immediately yields the desired estimates $\|y_2\|_{L^2(0,T;H^1(\Om))}\leq K_1C_1$ and $\|y_1\|_{L^2(0,T;H^1(\Om))}\leq \|S_0\|_{L^2(0,T;H^1(\Om))}+\||\Om|^{-1}C\|_{L^2(0,T;H^1(\Om))}+\|y_2\|_{L^2(0,T;H^1(\Om))}\leq 2K_2C_1+(|\Om|^{-1}T)^{\frac{1}{2}}C+K_1C_1=:C_2$

Furthermore, the derivatives $y^{\prime}_1$, $y^{\prime}_2$ can be estimated just like derivatives of transient solutions (see e.g. Evans \cite[Thm.~7.1.3]{evans} or Roschat et al.~\cite{ro14}) since these proofs only use that $y_1$ and $y_2$ are bounded in $L^2(0,T;H^1(\Om))$ and solve a weak formulation. Thus, there is an upper bound  $C_3$ for $y^{\prime}=(y^{\prime}_1$, $y^{\prime}_2)$ in $L^2(0,T;H^1(\Om)^*)^2$, depending on the norms of $y_1,y_2$ 
 and $F_j(z)$ 
 which are all bounded independently of $z$. 
\begin{erg}\label{erg:boundedness}
 Given $z\in L^2(0,T;L^2(\Om))^2$, the value $A(z)$ is bounded in $L^2(0,T;H^1(\Om))^2$, and thus particularly in $L^2(0,T;L^2(\Om))^2$, by $C_4:=\sqrt{C^2_2+(K_1C_1)^2}$. Moreover, $A(z)$ is bounded in $W(0,T;H^1(\Om))^2$ by $C_5:=\sqrt{C_4^2+C^2_3}$. All upper bounds are independent of $z$. 
\end{erg}

In the light of this result, the set
\begin{displaymath}
M:=\{y\in L^2(0,T;L^2(\Om))^2;\,\|y\|_{L^2(0,T;L^2(\Om))^2} \leq C_4\}
\end{displaymath}
turns out to be an appropriate domain of definition for $A$. Indeed, for every $z\in L^2(0,T;L^2(\Om))^2$, especially for every $z\in M$, Result~\ref{erg:boundedness} states $\|A(z)\|_{L^2(0,T;L^2(\Om))^2}\leq \|A(z)\|_{L^2(0,T;H^1(\Om))^2}\leq C_4$. Thus, $A(z)\in M$, i.e. $A:M\to M$ maps $M$ into itself. 

Since $M$ is a closed ball in $L^2(0,T;L^2(\Om))^2$ with a positive radius, it is nonempty, closed, bounded and convex.
To prove the compactness of $A$, let $\tilde{M}\subset L^2(0,T;L^2(\Om))^2$ be a bounded subset of $M$. According to Result~\ref{erg:boundedness}, $A(\tilde{M})$ is a bounded subset of $W(0,T;H^1(\Om))^2$. Since this space is compactly embedded in $L^2(0,T;L^2(\Om))^2$, i.e. the identity map between these spaces is compact (cf. R\r{u}\v{z}i\v{c}ka~\cite{ru10}), $A(\tilde{M})$ 
is a relatively compact subset of $L^2(0,T;L^2(\Om))^2$.

As to the continuity of $A$, we remark that the right-hand sides $F_j:L^2(0,T;L^2(\Om))^2\to L^2(0,T;H^1(\Om)^*)$ are continuous for each $j\in\{1,2\}$ due to the corresponding assumptions about $d_j$ and $b_j$. Given $z,\tilde{z}\in L^2(0,T;L^2(\Om))^2$, the difference $\delta:=A(z)-A(\tilde{z})$ is a periodic solution of
\begin{align*}
 \delta_1^{\prime}+B(\delta_1)-\lambda \delta_2&=F_1(z)-F_1(\tilde{z})\nonumber\\
  \delta_2^{\prime}+B(\delta_2)+\lambda \delta_2&=F_2(z)-F_2(\tilde{z}).
\end{align*}
Concerning the second component, Lemma~\ref{lem:abschaetzung}, applied to $w:=\delta_2$, $\gamma:=\lambda$, $R:=F_2(z)-F_2(\tilde{z})$, yields in particular
\begin{displaymath}
\|A(z)_2-A(\tilde{z})_2\|_{L^2(0,T;L^2(\Om))}\leq K_3\|F_2(z)-F_2(\tilde{z})
\|_{L^2(0,T;H^1(\Om)^*)}.
\end{displaymath} 
As to the first component, the calculation
\begin{displaymath}
\delta_1=A(z)_1-A(\tilde{z})_1=S_0(z)+|\Om|^{-1}C-A(z)_2-(S_0(\tilde{z})+|\Om|^{-1}C-A(\tilde{z})_2)=S_0(z)-S_0(\tilde{z})+(A(\tilde{z})_2-A(z)_2)
\end{displaymath}
 shows that, actually, an estimate for $S_0(z)-S_0(\tilde{z})\in L^2(0,T;V)$ is needed, i.e. for the periodic solution of 
\begin{align*}
  (S_0(z)-S_0(\tilde{z}))^{\prime}+B(S_0(z)-S_0(\tilde{z}))=F_1(z)+F_2(z)-(F_1(\tilde{z})+F_2(\tilde{z})).
  \end{align*}
After re-arranging the terms on the right-hand side, we define $w:=S_0(z)-S_0(\tilde{z})$, $\gamma:=0$, $W:=V$ and $R:=F_1(z)-F_1(\tilde{z})+F_2(z)-F_2(\tilde{z})$. Lemma~\ref{lem:abschaetzung} and the triangle inequality yield
\begin{displaymath}
\|S_0(z)-S_0(\tilde{z})\|_{L^2(0,T;L^2(\Om))}\leq K_4(\|F_1(z)-F_1(\tilde{z})\|_{L^2(0,T;H^1(\Om)^*)} +\|F_2(z)-F_2(\tilde{z})
\|_{L^2(0,T;H^1(\Om)^*)}).
\end{displaymath} 
Combining the previous results we obtain a constant $K_5>0$ with
\begin{displaymath}
\|A(z)-A(\tilde{z})\|_{L^2(0,T;L^2(\Om))^2}^2\leq
K_5(\|F_1(z)-F_1(\tilde{z})\|^2_{L^2(0,T;H^1(\Om)^*)}+\|F_2(z)-F_2(\tilde{z})\|_{L^2(0,T;H^1(\Om)^*)}^2)
\end{displaymath}

The continuity now follows easily. Let $\varepsilon>0$. Due to the continuity of $F_j:L^2(0,T;L^2(\Om))^2\to L^2(0,T;H^1(\Om)^*)$, there exists a $\delta>0$ with
\begin{displaymath}
\|F_j(z)-F_j(\tilde{z})\|_{L^2(0,T;H^1(\Om)^*)}<\frac{\varepsilon}{\sqrt{2K_5}}\quad\text{provided that }\|z-\tilde{z}\|_{L^2(0,T;L^2(\Om))^2}<\delta\quad\text{for both }j \in \{1,2\}.
\end{displaymath}
Together with the estimate for $A(z)-A(\tilde{z})$ this result yields immediately $\|A(z)-A(\tilde{z})\|_{L^2(0,T;L^2(\Om))^2}<\varepsilon$ provided that $\|z-\tilde{z}\|_{L^2(0,T;L^2(\Om))^2}<\delta$.
Thus, $A$ is a compact operator.

Having proved all necessary assumptions, the Schauder Fixed Point Theorem guarantees the existence of a fixed point $y\in M$ of $A$. By definition, $y$ belongs to $W(0,T;H^1(\Om))^2$, is a periodic solution of
\begin{align*}
 y_1^{\prime}+B(y_1)-\lambda y_2&=F_1(y)\nonumber\\
  y_2^{\prime}+B(y_2)+\lambda y_2&=F_2(y)
\nonumber
\end{align*}
and fulfills $\mass(y(t))=C$ for all $t\in[0,T]$. 
\qed
%
\section{Application to the $PO_4$-$DOP$ model by Parekh et al.}
A well-known marine ecosystem model of $N$-$DOP$ type is the $PO_4$-$DOP$ model by Parekh at al.~\cite{pa05}. In this paper, the authors  present a model of the iron concentration in relation to the marine phosphorus cycle. Without the equation for iron, a model of the global phosphorus cycle with the two variables phosphate and dissolved organic phosphorus remains. In the following, we shortly introduce the model equations (cf. also Roschat et al.~\cite{ro14}) and show afterwards that the assumptions of our main theorem are met. 
\subsection{The domain}
The modeled ecosystem is located in a three-dimensional bounded domain $\Om\subseteq\mathbb{R}^3$. $\Om$ is determined by the open, bounded water surface $\Om^{\prime}\subseteq\mathbb{R}^2$ and the depth $h(x^{\prime})>0$ at every surface point $x^{\prime}\in \Om^{\prime}$. The function $h$ is supposed to be continuous and bounded by the total depth of the ocean $h_{max}$. Thus, $\Om:=\{(x^{\prime\!\!},x_3);x^{\prime}\in\Om^{\prime},x_3\in (0,h(x^{\prime}))\}$. 
The boundary $\Ga$ is the union of the surface $\Ga^{\prime}:=\overline{\Om^{\prime}}\times\{0\}$ 
and the boundary inside the water $\{(x^{\prime\!\!},h(x^{\prime}));x^{\prime}\in\Om^{\prime}\}$. The latter is isomorphic to $\Om^{\prime}$ since $h$ is a function.

The domain is separated into two layers, the euphotic, light-flooded zone $\Om_1$ below the surface and the dark, aphotic zone $\Om_2$ beneath. The maximum depth of the euphotic zone is denoted by $\bar{h}_e$. 
The actual depth of the euphotic zone is defined by $h_e(x^{\prime}):=\min\{\bar{h}_e,h(x^{\prime})\}$. We split the surface into the part $\Om_2^{\prime}:=\{x^{\prime}\in \Om^{\prime}; h(x^{\prime})>\bar{h}_e\}$ above the aphotic zone and the rest $\Om^{\prime}_1:=\Om^{\prime}\setminus \Om_2^{\prime}$. The boundary is divided analogously. In summary, the relevant domains of definitions are
\begin{itemize} 
\item the euphotic zone $\Om_1:=
\{(x^{\prime\!\!},x_3);x^{\prime}\in\Om^{\prime},x_3\in (0,h_e(x^{\prime}))\}$,
  \item the aphotic zone $\Om_2:=\{(x^{\prime\!\!},x_3);x^{\prime}\in\Om_2^{\prime},x_3\in (\bar{h}_e,h(x^{\prime}))\}$,
  \item  the euphotic boundary $\Ga_1:=\{(x^{\prime\!\!},h(x^{\prime}));x^{\prime}\in\overline{\Om^{\prime}_1}\}$,
  \item the aphotic boundary $\Ga_2:=\{(x^{\prime\!\!},h(x^{\prime}));x^{\prime}\in\Om_2^{\prime}\}$.
\end{itemize}
%
%
\subsection{The model}
Let the two model variables $y_1:=PO_4$ and $y_2:=DOP$ be assembled in the vector $y$. One important biogeochemical process, typical for all $N$-$DOP$ type models, is the remineralization of $y_2$ into $y_1$ with a remineralization rate $\lambda>0$. Being independent of light, this transformation takes place in the whole domain $\Om$. It is already reflected in the model equations \eqref{eq:zustand}. The remaining processes, represented by the reaction terms $d_j$ and $b_j$, differ according to the layers. In the light-flooded zone, $y_1$ is taken up via photosynthesis. The uptake is modeled in almost every $(x,t)\in\Om\times[0,T]$ by
\begin{align*}
G(y_1,x,t)&:=
\alpha\frac{y_1(x,t)}{|y_1(x,t)|+K_P}\frac{I(x,t)}{|I(x,t)|+K_I}. 
\end{align*}
This expression assumes a maximum uptake $\alpha>0$, limited by the present concentration $y_1(x,t)$ and insolation by means of saturation functions. $K_P,K_I>0$ are the corresponding half saturation constants. Insolation is represented by the non-negative, bounded function $I\in L^{\infty}(0,T;L^{\infty}(\Om))$ which has positive values only in $\Om_1$. 
$G$ can be regarded as a superposition operator of the function
\begin{displaymath}
G:\mathbb{R}\times\Om\times[0,T]\to\mathbb{R}, \quad G(y_1,x,t):=\alpha\frac{y_1}{|y_1|+K_P}\frac{I(x,t)}{|I(x,t)|+K_I}.
\end{displaymath}
Obviously, the functions $x\mapsto G(y_1,x,t)$ and $(x,t)\mapsto G(y_1,x,t)$ are measurable for every fixed $y_1\in \mathbb{R}$ and, if necessary, $t\in[0,T]$ since this is the case for $I$. The function $y_1\mapsto G(y_1,x,t)$ is continuous for almost every $(x,t)\in \Om\times[0,T]$. 
Furthermore, we see easily
\begin{beo}
The estimate $|G(y_1,x,t)|\leq\alpha$ holds for all $y_1\in \mathbb{R}, x\in\Om, t\in[0,T]$. 
\end{beo}
Thus, the results of Appell et al.~\cite[Thms. 3.1,\,3.7]{ap90} (cf. also \cite[Sec. 4.3.3]{tr02}) can be applied twice. First, consider a fixed point of time $t\in[0,T]$. Then, the real function $G(t):\mathbb{R}\times\Om\to \mathbb{R}, \,G(t)(y_1,x):=G(y_1,x,t)$ generates a well-defined and continuous superposition operator $G(t):L^2(\Om)\to L^2(\Om)$. Likewise, the real function $G$ itself generates the continuous superposition operator $G:L^2(0,T;L^2(\Om))\to L^2(0,T;L^2(\Om))$.
In the following, we will write $G(y_1,x,t)$ instead of both $G(y_1(x),x,t)$ and $G(y_1(x,t),x,t)$ if $y_1\in L^2(\Om)$ or $y_1\in L^2(0,T;L^2(\Om))$, respectively.

The model's reaction terms describe that a fraction $\nu\in[0,1]$ of the uptake $G$ is transformed into $y_2$ while the remnants are exported into $\Om_2$. The parameter $\beta>0$ describes the sinking of particles. For almost every $t\in[0,T]$, these processes are represented by the nonlinear coupling terms $d_j(t):L^2(\Om)^2\to L^2(\Om)$ defined by 
\begin{displaymath}
d_1(y,x,t):=
\begin{cases}
 G(y_1,x,t)&\mbox{in $\Om_1$,}\\
-(1-\nu)\int_0^{h_e(x^{\prime})}G(y_1,(x^{\prime\!\!},x_3),t)dx_3\frac{\beta}{\bar{h}_e}\left(\frac{x_3}{\bar{h}_e}\right)^{-\beta-1}&\mbox{in $\Om_2$}
\end{cases}
\qquad\text{and}\qquad
d_2(y,x,t):=
\begin{cases}
 -\nu G(y_1,x,t)&\mbox{in $\Om_1$,}\\
 0&\mbox{in $\Om_2$}
\end{cases}
\end{displaymath}
and the 
 boundary conditions $b_j(t):L^2(\Om)^2\to L^2(\Ga)$, defined by $b_2(t)=0$ and
\begin{displaymath}
b_1(y,x,t):=
\begin{cases}
 -(1-\nu)\int_0^{h_e(x^{\prime})}G(y_1,(x^{\prime\!\!},x_3),t)dx_3&\mbox{for $x=(x^{\prime\!\!},h(x^{\prime}))\in\Ga_1$,}\\
 -(1-\nu)\int_0^{h_e(x^{\prime})}G(y_1,(x^{\prime\!\!},x_3),t)dx_3\left(\frac{h(x^{\prime})}{\bar{h}_e}\right)^{-\beta}&\mbox{for $x=(x^{\prime\!\!},h(x^{\prime}))\in\Gamma_2$,}\\
0&\mbox{for $x=(x^{\prime\!\!},0)\in\Ga^{\prime}$}.
\end{cases}
\end{displaymath}
\begin{proposition}\label{prop:boundedness}
 The reaction terms fulfill the boundedness conditions
\begin{align*}
 \max_j|d_j(y,x,t)|&\leq\max\{\alpha,(1-\nu)\alpha\beta\}&
 \text{for all }y\in L^2(\Om)^2, x\in\Om, t\in[0,T]\\
\max_j|b_j(y,x,t)|&\leq(1-\nu)\alpha\bar{h}_{e}&
\text{for all }y\in L^2(\Om)^2, x\in\Ga, t\in[0,T]
\end{align*}
 \end{proposition}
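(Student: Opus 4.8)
The plan is to verify both boundedness estimates by treating each branch of the piecewise definitions of $d_j$ and $b_j$ separately, using throughout the single uniform bound $|G(y_1,x,t)|\leq\alpha$ from the preceding remark, and then taking the maximum over the branches and over $j\in\{1,2\}$. Since the $y_1$-dependence enters only through $G$, all bounds will be uniform in $y\in L^2(\Om)^2$, as required.

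First I would handle $d_1$. On $\Om_1$ the estimate is immediate: $|d_1(y,x,t)|=|G(y_1,x,t)|\leq\alpha$. On $\Om_2$ a point $x=(x',x_3)$ has $x'\in\Om_2'$, hence $h(x')>\bar h_e$ and therefore $h_e(x')=\bar h_e$; bounding the integrand by $\alpha$ gives $\left|\int_0^{h_e(x')}G(y_1,(x',x_3),t)\,dx_3\right|\leq\alpha\bar h_e$. It then remains to control the remaining factor $\tfrac{\beta}{\bar h_e}\bigl(\tfrac{x_3}{\bar h_e}\bigr)^{-\beta-1}$, where $x_3\in(\bar h_e,h(x'))$ is the depth of the current point. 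This is the one genuinely nonvacuous step: because $x_3>\bar h_e$ the base $x_3/\bar h_e$ exceeds $1$, and since the exponent $-\beta-1$ is negative, the power is bounded by $1$, so the whole factor is at most $\beta/\bar h_e$. Multiplying, $|d_1|\leq(1-\nu)\cdot\alpha\bar h_e\cdot\tfrac{\beta}{\bar h_e}=(1-\nu)\alpha\beta$ on $\Om_2$. For $d_2$ the situation is easy: on $\Om_1$ one has $|d_2|=\nu|G|\leq\nu\alpha\leq\alpha$ (using $\nu\in[0,1]$) and $d_2=0$ on $\Om_2$. Taking the maximum of the branch bounds $\alpha$, $(1-\nu)\alpha\beta$, and $\nu\alpha$ yields $\max_j|d_j|\leq\max\{\alpha,(1-\nu)\alpha\beta\}$, the first claimed inequality.

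For the boundary terms, since $b_2=0$ only $b_1$ needs attention. On $\Ga_1$ one has $x'\in\overline{\Om_1'}$, so by continuity of $h$ the relation $h(x')\leq\bar h_e$ holds, whence $h_e(x')=h(x')$ and the integral is bounded by $\alpha h(x')\leq\alpha\bar h_e$, giving $|b_1|\leq(1-\nu)\alpha\bar h_e$. On $\Ga_2$ we have $x'\in\Om_2'$, so $h(x')>\bar h_e$ and the extra factor $\bigl(\tfrac{h(x')}{\bar h_e}\bigr)^{-\beta}$ is bounded by $1$ by exactly the same sign-of-exponent argument as before; combined with the integral bound $\alpha\bar h_e$ (again $h_e(x')=\bar h_e$ here) this yields $|b_1|\leq(1-\nu)\alpha\bar h_e$. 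On $\Ga'$ the term vanishes. Collecting the branches gives $\max_j|b_j|\leq(1-\nu)\alpha\bar h_e$.

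The only real subtlety, and hence the step I would state most carefully, is the monotonicity argument for the power-law factors: both $\bigl(\tfrac{x_3}{\bar h_e}\bigr)^{-\beta-1}$ and $\bigl(\tfrac{h(x')}{\bar h_e}\bigr)^{-\beta}$ are controlled precisely because the arguments lie strictly above $\bar h_e$ in the respective zones, so the negative exponents force the factors below $1$. Everything else reduces to replacing $|G|$ by $\alpha$ and integrating over a depth interval of length at most $\bar h_e$; no regularity or measurability issues arise beyond those already recorded for the superposition operator $G$.
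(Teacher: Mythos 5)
Your proof is correct and follows essentially the same route as the paper's: bound $|G|\leq\alpha$ uniformly, observe that the power-law factors $\bigl(\tfrac{x_3}{\bar h_e}\bigr)^{-\beta-1}$ and $\bigl(\tfrac{h(x^{\prime})}{\bar h_e}\bigr)^{-\beta}$ are at most $1$ because their arguments exceed $\bar h_e$ in $\Om_2$ and on $\Ga_2$, and then estimate branch by branch using $h_e(x^{\prime})\leq\bar h_e$. The only differences are cosmetic: you spell out the $\Ga_1$ case (where the paper says ``obviously''), and your continuity-of-$h$ argument there is actually unnecessary, since $h_e(x^{\prime})=\min\{\bar h_e,h(x^{\prime})\}\leq\bar h_e$ gives the integral bound directly.
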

\begin{proof}
  First we observe that the coordinate indicating depth $x_3$ belongs to $[\bar{h}_e,h(x^{\prime})]$ if $(x^{\prime\!\!},x_3)\in\Om_2\cup\Ga_2$. Given an arbitrary $\gamma>0$, we conclude
\begin{eqnarray}\label{eq:hilf_tiefekleiner1}
\left(\frac{x_3}{\bar{h}_e}\right)^{-\gamma}=\left(\frac{\bar{h}_e}{x_3}\right)^{\gamma}\leq\left(\frac{\bar{h}_e}{\bar{h}_e}\right)^{\gamma}=1\quad\mbox{ for all $(x^{\prime\!\!},x_3)\in\overline{\Om}_2$.}
\end{eqnarray}
Let $y\in L^2(\Om)^2, t\in[0,T]$. We estimate the reaction terms by means of \eqref{eq:hilf_tiefekleiner1} and the remarked boundedness of $G$. First, let $x\in\Om_1$. Then 
$|d_1(y,x,t)|=|G(y_1,x,t)|\leq\alpha$ and $|d_2(y,x,t)|\leq\nu\alpha\leq\alpha$ since $\nu\leq1$.

Given $x\in\Om_2$, we have $h_e(x^{\prime})=\bar{h}_e$ and thus
\begin{displaymath}
|d_1(y,x,t)|\leq(1-\nu)\int_0^{h_e(x^{\prime})}|G(y_1,(x^{\prime\!\!},x_3),t)|dx_3\frac{\beta}{\bar{h}_e}\left(\frac{x_3}{\bar{h}_e}\right)^{-\beta-1}
\leq(1-\nu)\alpha\beta.
\end{displaymath}
Let now $x\in \Ga_2$. Then 
\begin{displaymath}
|b_1(y,x,t)|\leq
(1-\nu)\int_0^{h_e(x^{\prime})}|G(y_1,(x^{\prime\!\!},x_3),t)|dx_3\left(\frac{h(x^{\prime})}{\bar{h}_e}\right)^{-\beta}\leq
(1-\nu)h_e(x^{\prime})\alpha
\leq (1-\nu)\bar{h}_{e}\alpha.
\end{displaymath}
Obviously, the same estimate holds for $|b_1(y,x,t)|$ with $x\in \Ga_1$. Since 
$d_2(t)=0$ in $\Om_2$ and
$b_2(t)=0$ on $\Ga$ the proposition is proved.
\end{proof}
Since the upper bounds are independent of $t$, the proposition ensures that the reaction terms on the spaces of time-dependent functions
\begin{displaymath}
d_j:L^2(0,T;L^2(\Om))^2\to L^2(0,T;L^2(\Om))\quad
\text{ and }\quad
b_j:L^2(0,T;L^2(\Om))^2\to L^2(0,T;L^2(\Ga)),
\end{displaymath}
generated by the families $(d_j(t))_t$, $(b_j(t))_t$ (cf. general assumption), are well-defined for each $j\in\{1,2\}$.

\subsection{Periodic solutions of the $PO_4$-$DOP$ model}
In order to apply the existence theorem to the $PO_4$-$DOP$ model, the corresponding assumptions have to be verified. 
\paragraph{Continuity}
We have already proved in the last section that the uptake function $G:L^2(0,T;L^2(\Om))\to L^2(0,T;L^2(\Om))$ is continuous. In addition, the reaction terms contain the integral of $G$ with respect to the third variable. Therefore, we prove the following general lemma.
\begin{lemma}
 Let $E\in\{\Om,\Om^{\prime}\}$ and $g\in L^{\infty}(0,T;L^{\infty}(E))$. Hence, the operator $F:L^2(0,T;L^2(\Om))\to L^2(0,T;L^2(E))$, given by
 \begin{displaymath}
Fy(x,t):=g(x,t)\int_0^{h_e(x^{\prime})}\!\!y(x^{\prime\!\!},x_3,t)dx_3\quad\text{for all $y\in L^2(0,T;L^2(\Om))$ and almost all $(x,t)\in E\times[0,T]$},
\end{displaymath}
is well-defined and continuous.
\end{lemma}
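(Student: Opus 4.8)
The plan is to exploit that $F$ is \emph{linear} in $y$, so that proving continuity reduces to proving boundedness, i.e. exhibiting a constant $K$ with $\|Fy\|_{L^2(0,T;L^2(E))}\leq K\|y\|_{L^2(0,T;L^2(\Om))}$ for all $y$. The only nontrivial ingredient is controlling the inner depth integral $\int_0^{h_e(x^{\prime})}y(x^{\prime\!\!},x_3,t)\,dx_3$ by the $L^2$-norm of $y$ over the water column, which I would handle via the Cauchy--Schwarz inequality together with the uniform bound $h_e(x^{\prime})=\min\{\bar{h}_e,h(x^{\prime})\}\leq\bar{h}_e$.

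Concretely, I would first write
\begin{displaymath}
\|Fy\|_{L^2(0,T;L^2(E))}^2=\int_0^T\!\!\int_E|g(x,t)|^2\left|\int_0^{h_e(x^{\prime})}y(x^{\prime\!\!},x_3,t)\,dx_3\right|^2dx\,dt,
\end{displaymath}
bound $|g|^2$ pointwise by $\|g\|_{L^\infty(0,T;L^\infty(E))}^2=:\|g\|_\infty^2$, and apply Cauchy--Schwarz to the depth integral to obtain the pointwise estimate $\left|\int_0^{h_e(x^{\prime})}y\,dx_3\right|^2\leq\bar{h}_e\int_0^{h_e(x^{\prime})}|y(x^{\prime\!\!},x_3,t)|^2\,dx_3$.

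Then I would distinguish the two admissible choices of $E$. If $E=\Om^{\prime}$, the variable $x$ equals $x^{\prime}$ and Tonelli's theorem identifies the remaining double integral $\int_{\Om^{\prime}}\int_0^{h_e(x^{\prime})}|y|^2\,dx_3\,dx^{\prime}$ as the integral of $|y(t)|^2$ over the euphotic zone $\Om_1\subseteq\Om$, which is at most $\|y(t)\|_{L^2(\Om)}^2$; integrating in $t$ yields $K^2=\|g\|_\infty^2\bar{h}_e$. If $E=\Om$, then $x=(x^{\prime\!\!},x_3)$, but the depth integral is independent of the third coordinate, so I would first integrate $|g|^2$ over $x_3\in(0,h(x^{\prime}))$ using $h(x^{\prime})\leq h_{max}$, and then repeat the previous reduction to $\Om_1$; this produces an extra factor $h_{max}$ and hence $K^2=\|g\|_\infty^2\bar{h}_eh_{max}$.

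I expect the main obstacle to be not the estimate itself, which is routine, but the careful bookkeeping of \emph{well-definedness and measurability}: I must justify, via Tonelli's theorem and the continuity (hence measurability) of $h_e=\min\{\bar{h}_e,h\}$, that the map $x^{\prime}\mapsto\int_0^{h_e(x^{\prime})}y(x^{\prime\!\!},x_3,t)\,dx_3$ is measurable and that $Fy$ is a genuine element of $L^2(0,T;L^2(E))$, while correctly handling the case $E=\Om$ in which the output lives on the three-dimensional domain even though the averaging is one-dimensional. Once the boundedness constant $K$ is established, continuity of the linear operator $F$ follows immediately.
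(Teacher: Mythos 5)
Your proposal is correct and follows essentially the same route as the paper's own proof: linearity reduces continuity to boundedness, the depth integral is controlled by Cauchy--Schwarz (H\"older) together with $h_e(x^{\prime})\leq\bar{h}_e$, and the case $E=\Om$ picks up the extra factor $h_{max}$ because the integrand is independent of the third coordinate, yielding exactly the constants $\|g\|_\infty^2\bar{h}_e h_{max}$ and $\|g\|_\infty^2\bar{h}_e$ that the paper obtains. Your explicit attention to measurability of $x^{\prime}\mapsto\int_0^{h_e(x^{\prime})}y\,dx_3$ via Tonelli is a point the paper passes over silently, but it is not a substantive difference in approach.
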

\begin{proof}
 Let $y\in L^2(0,T;L^2(\Om))$. We investigate the norm of $Fy$ in order to find out that $F$ is well-defined and bounded. First, consider $E=\Om$. With H\"older's inequality and $h_e(x^{\prime})=\min\{\bar{h}_e,h(x^{\prime})\}$ we obtain for the second part of $Fy$
 \begin{equation}\label{eq:hoelder}
\left(\int_0^{h_e(x^{\prime})}\!\!y(x^{\prime\!\!},x_3,t)dx_3\right)^2\leq h_e(x^{\prime})\int_0^{h_e(x^{\prime})}\!\!y(x^{\prime\!\!},x_3,t)^2dx_3
\leq \bar{h}_e\int_0^{h(x^{\prime})}\!\!y(x^{\prime\!\!},x_3,t)^2dx_3
.
\end{equation}
To estimate $Fy$, we express the integral over $\Om$ by the integrals over $\Om^{\prime}$ and $[0,h(x^{\prime})]$. The first part $g$ is bounded by the constant $\|g\|:=\|g\|_{L^{\infty}(0,T;L^{\infty}(E))}$. The second one is estimated by \eqref{eq:hoelder}. Since the upper bound established in \eqref{eq:hoelder} is independent of $\tilde{x}_3\in[0,h(x^{\prime})]$ the corresponding integral vanishes. Finally, the depth function $h$ is bounded by the maximum depth $h_{\max}$. 
These steps lead to the estimate 
\begin{align*}
 \|Fy\|^2_{L^2(0,T;L^2(\Om))}&\leq\|g\|^2\int_0^T\!\!\int_{\Om^{\prime}}\int_0^{h(x^{\prime})}\!\!
 \left(\int_0^{h_e(x^{\prime})}\!\!y(x^{\prime\!\!},x_3,t)dx_3\right)^2
 d\tilde{x}_3dx^{\prime}dt
 \leq
 \|g\|^2\int_0^T\!\!\int_{\Om^{\prime}}h(x^{\prime})
\bar{h}_e\int_0^{h(x^{\prime})}\!\!y(x^{\prime\!\!},x_3,t)^2dx_3
dx^{\prime}dt\\
& \leq
 \|g\|^2h_{\max}\bar{h}_e\int_0^T\!\!\int_{\Om}
y(x^{\prime\!\!},x_3,t)^2
d(x^{\prime\!\!},x_3)dt
= \|g\|^2h_{\max}\bar{h}_e\|y\|_{L^2(0,T;L^2(\Om))}^2.
\end{align*}
In case $E=\Om^{\prime}$, the estimate remains the same, except for the missing integral over $[0,h(x^{\prime})]$. Here, the upper bound for $F$ is thus given by $\|g\|\sqrt{\bar{h}_e}$. 

As a result, $F$ is a well-defined and bounded operator. Being additionally linear, $F$ is continuous.
\end{proof}
The reaction terms $d_j$ and $b_1$ are defined as compositions of $G$ and $F$ with a factor $g$ bounded by 1, cf. \eqref{eq:hilf_tiefekleiner1}. By definition, the boundary integral over $\Ga_1\cup\Ga_2$ corresponds to the integral over $\Om^{\prime}$. Thus, both reaction terms are continuous in the desired spaces.

\paragraph{Boundedness}
The boundedness condition~\eqref{eq:totalbound} is proved in Proposition~\ref{prop:boundedness} since, in particular, constants are quadratically integrable.
%
%
\paragraph{Conservation of mass}
Let $z\in L^2(0,T;L^2(\Om))^2$ be arbitrary. In this paragraph, we prove: 
\begin{equation*}
\sum_{j=1}^2\int_{\Omega}d_j(z,x,t)dx+\int_{\Ga}b_1(z,s,t)ds=0.
\end{equation*}
By definition, the integrals over $\Om$ are equal to
\begin{align*}
\sum_{j=1}^2\int_{\Omega}d_j(z,x,t)dx= \int_{\Om_1}G(z_1,x,t)dx-\int_{\Om_2}(1-\nu)\int_0^{h_e(x^{\prime})}G(z_1,(x^{\prime\!\!},x_3),t)dx_3\frac{\beta}{\bar{h}_e}\left(\frac{x_3}{\bar{h}_e}\right)^{-\beta-1}dx+\int_{\Om_1}-\nu G(z_1,x,t)dx.
\end{align*}
As to the middle term, we obtain by inserting the definition of $\Om_2$ 
\begin{displaymath}
M:=(1-\nu)\int_{\Om_2}\!\int_0^{h_e(x^{\prime})}\!\!G(z_1,(x^{\prime\!\!},x_3),t)dx_3\frac{\beta}{\bar{h}_e}\left(\frac{x_3}{\bar{h}_e}\right)^{-\beta-1}\!\!dx
=(1-\nu)\int_{\Om^{\prime}_2}\!\int_0^{h_e(x^{\prime})}\!\!G(z_1,(x^{\prime\!\!},x_3),t)dx_3\frac{\beta}{\bar{h}_e}\int_{\bar{h}_e}^{h(x^{\prime})}\!\left(\frac{x_3}{\bar{h}_e}\right)^{-\beta-1}\!\!dx_3dx^{\prime\!\!}.
\end{displaymath}
The second integral with respect to $x_3$ can be solved analytically:
\begin{displaymath}
\frac{\beta}{\bar{h}_e}\int_{\bar{h}_e}^{h(x^{\prime})}\left(\frac{x_3}{\bar{h}_e}\right)^{-\beta-1}dx_3
=\left(\frac{1}{\bar{h}_e}\right)^{-\beta}\left[-x_3^{-\beta}\right]_{\bar{h}_e}^{h(x^{\prime})}
=\left(\frac{1}{\bar{h}_e}\right)^{-\beta}\left[-h(x^{\prime})^{-\beta}+\bar{h}_e^{-\beta}\right]
=-\left(\frac{h(x^{\prime})}{\bar{h}_e}\right)^{-\beta}+1.
\end{displaymath}
Using $\Om_1:=\{(x^{\prime\!\!},x_3);x^{\prime}\in\Om^{\prime},x_3\in (0,h_e(x^{\prime}))\}$, $\Om^{\prime}=\Om^{\prime}_1\,\dot\cup\,\Om^{\prime}_2$ and, finally, the definition of the boundary reaction terms, we obtain for $M$:
\begin{align*}
M&=(1-\nu)\int_{\Om^{\prime}_2}\int_0^{h_e(x^{\prime})}G(z_1,(x^{\prime\!\!},x_3),t)dx_3dx^{\prime}
-(1-\nu)\int_{\Om^{\prime}_2}\int_0^{h_e(x^{\prime})}G(z_1,(x^{\prime\!\!},x_3),t)dx_3\left(\frac{h(x^{\prime})}{\bar{h}_e}\right)^{-\beta}dx^{\prime}\\
&=(1-\nu)\int_{\Om_1}G(z_1,x,t)dx
-
(1-\nu)\int_{\Om^{\prime}_1}\int_0^{h_e(x^{\prime})}G(z_1,(x^{\prime\!\!},x_3),t)dx_3dx^{\prime}
+
\int_{\Ga_2}b_1(z,s,t)ds\\
&=(1-\nu)\int_{\Om_1}G(z_1,x,t)dx
+
\int_{\Ga_1}b_1(z,s,t)ds+
\int_{\Ga_2}b_1(z,s,t)ds
.
\end{align*}
 Combining the results, we arrive at
\begin{align*}
\sum_{j=1}^2\int_{\Omega}d_j(z,x,t)dx=(1-\nu) \int_{\Om_1}G(z_1,x,t)dx-M=-\int_{\Ga}b_1(z,s,t)ds.
\end{align*}
This statement is equivalent to the conservation of mass condition.

 \section*{Acknowledgment}
The research of Christina Roschat was supported by the DFG Cluster Future Ocean, Grant No. CP1338.




\bibliographystyle{elsarticle-num}
\bibliography{meinebibliothek}



\end{document}